\documentclass[12pt, reqno]{amsart}
\usepackage{amsmath, amsthm, amscd, amsfonts, amssymb, graphicx, xcolor}
\usepackage[hidelinks]{hyperref}

\newtheorem{theorem}{Theorem}[section]
\newtheorem{lemma}[theorem]{Lemma}

\newtheorem{corollary}[theorem]{Corollary}
\theoremstyle{definition}
\newtheorem{definition}[theorem]{Definition}
\newtheorem{example}[theorem]{Example}

\theoremstyle{remark}
\newtheorem{remark}[theorem]{Remark}
\numberwithin{equation}{section}

\begin{document}
\setcounter{page}{1}

\color{darkgray}
\noindent 
{\small The University of Notre Dame} \hfill {\small Sept. 2026}

\centerline{}

\centerline{}

\title{Finite Spectra of Syllogistic Logic with Cardinality Comparisons}

\author{Ruiting Jiang}

\address{
Department of Philosophy\\
University of Notre Dame\\
Notre Dame, IN, USA
}

\email{rjiang2@nd.edu}

\begin{abstract}
We study the finite spectra of the syllogistic logic with cardinality
comparisons $S^\dagger(card)$. Since the language does not contain
conjunction at the sentence level, we consider the spectrum of a theory
$\Gamma$, namely the set of positive integers $n$ for which $\Gamma$ is
satisfiable in an $n$-element model. We give a complete classification
of these spectra: besides the empty set, every $S^\dagger(card)$-spectrum
is either an eventual tail of the positive integers or an eventual tail
of the positive even integers. We then consider the extension of
$S^\dagger(card)$ by Boolean connectives at the sentence level and show
that the collection of its spectra forms the topology generated by the
spectra of the original language.
\end{abstract}

\maketitle 

\textit{Keywords:} Natural logic, Finite spectra, Syllogistic logic, Finite model theory, Philosophical Logic, Model theory.

\section{Introduction}

\noindent The finite spectrum of a sentence $\phi$ is defined\footnote{See, for example, Marker~\cite[p.~30]{Marker2002}.} as the set of positive integers $n$ for which $\phi$ is satisfiable in an $n$-element model. 

\begin{definition}
	$Spec(\phi) = \{n\in\mathbb N^+ : \phi\text{ has a model of cardinality }n\}$. 
\end{definition}

In 1952, Scholz~\cite{Scholz1952} asked which subsets of $\mathbb N^+$ are the finite spectra of first-order sentences. An answer to this question can take many forms. For example, Jones and Selman~\cite{JonesSelman1974} give a complexity-theoretic characterization of first-order spectra, showing that they are exactly sets accepted by nondeterministic Turing machines in exponential time.

In fact, this question can be generalised to any formal language $\mathcal{L}$, and the answer provides insights about which finite cardinalities can be distinguished by the language $\mathcal{L}$. Therefore, this is an important question about the expressive power of a language, and in the case of first-order language it is an active topic of research in finite model theory. For more literature on finite spectra, see Durand et al.~\cite{SpectrumSurvey} for a survey.

In this paper, we ask this question for the Syllogistic Logic with
Cardinality Comparisons $S^\dagger(card)$, introduced by Moss~\cite{Larry}.
This is a particularly natural setting for the spectrum question. The
language is syntactically weak: its sentences can only say things such
as ``all $x$'s are $y$'s'', ``some $x$'s are $y$'s'', ``there are at
least as many $x$'s as $y$'s'', and ``there are more $x$'s than
$y$'s'', where nouns are interpreted as subsets of the underlying
finite universe. At the same time, the latter two forms of sentence
compare cardinalities directly. Hence it is natural to ask how much
information about the cardinality of the entire universe can be
recovered from comparisons between the cardinalities of its subsets.

However, since there is no conjunction at the sentence level, we do not ask about the spectrum of a single sentence. Instead, we consider the spectrum of a theory\footnote{By a ``theory'', we just mean a set of sentences, not necessarily closed under logical consequence.} $\Gamma$: 

\begin{definition}
	$Spec(\Gamma) = \{n\in\mathbb N^+ : \Gamma\text{ has a model of cardinality }n\}$.
\end{definition}

We call such a set an $S^{\dagger}(card)$-spectrum. When $\Gamma$ is finite, this is the natural analogue of the ordinary finite spectrum in a language with conjunction, because $\Gamma$ is equivalent to the conjunction of its sentences. Moreover, as we will see, for any theory $\Gamma$ in $S^{\dagger}(card)$, there is a finite theory $\Gamma_0$ such that $Spec(\Gamma) = Spec(\Gamma_0)$.  

Our main result gives a complete classification of the $S^{\dagger}(card)$-spectra. For $(k\geq1)$, let $T_k=\{n\in\mathbb{N}^+ \mid n\geq k\}$ be the eventual tail of the natural numbers, and $E_k=\{n\in\mathbb{N}^+ \mid n\geq k
\text{ and }n\text{ is even}\}$ be the eventual tail of the even natural numbers.

\begin{theorem}
	The $S^{\dagger}(card)$-spectra are exactly the sets $\varnothing, T_k,$ and $E_k$, where $k\geq1$.
\end{theorem}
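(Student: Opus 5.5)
The plan is to prove two inclusions. First, every set in the list is a spectrum. Second, every spectrum is in the list. For the easy direction I will write down theories directly, taking nouns to be atoms together with their complements $\bar x$. The theory $\{\text{Some } x \text{ are } \bar x\}$ has empty spectrum. For $T_k$, take fresh nouns $a_1,\dots,a_k$ with the sentences ``Some $a_i$ are $a_i$'' and ``All $a_i$ are $\bar a_j$'' for $i\neq j$. These force $k$ nonempty pairwise disjoint sets, and any $n\ge k$ is realised by putting one point in each $a_i$. For $E_k$, add ``there are at least as many $x$ as $\bar x$'' and ``there are at least as many $\bar x$ as $x$''. These force $|x|=n/2$, so $n$ is even, and every even $n\ge k$ is realised by splitting the universe evenly by $x$, independently of the $a_i$. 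When $k$ is odd we have $E_k=E_{k+1}$, so this case is covered too.

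For the hard direction, the first structural step is \emph{closure under disjoint unions}. If $M_1,M_2\models\Gamma$, interpret each noun in $M_1\sqcup M_2$ as the union of its two interpretations. Complements then also behave additively. ``All'' and ``Some'' are preserved because they are local, and both $\ge$ and $>$ between cardinalities are preserved under addition. Hence $Spec(\Gamma)$ is closed under $+$. Scaling each point by $t$ copies is a special case.

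The second step is to pass to the finite set of nouns that matter. Here I will use the reduction to a finite $\Gamma_0$ announced in the introduction. I would prove it by compactness-free means: the spectrum is the set of totals $\sum_r c_r$ of nonnegative integer vectors $(c_r)$ indexed by Venn regions. These vectors must satisfy finitely many types of homogeneous linear conditions. ``All'' says certain $c_r=0$. ``Some'' says a sum is $\ge1$. The comparisons give $\ge$ and $>$ inequalities between sums.

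The heart of the proof is then a lemma of the form: \emph{if $n\in Spec(\Gamma)$ then $n+2\in Spec(\Gamma)$, and if $Spec(\Gamma)$ contains an odd number $m$ then it contains every $n\ge \min Spec(\Gamma)$.} Given this lemma, the classification follows. Let $k=\min Spec(\Gamma)$. Then $Spec(\Gamma)\supseteq\{k,k+2,k+4,\dots\}$. Either every element has the same parity as $k$, or the lemma fills in the remaining numbers. If all elements are even we get $E_k$. If any element is odd we get $T_k$. The case where all elements are odd cannot occur, because $2k\in Spec(\Gamma)$ by the union closure.

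For the $+2$ step I would first try to add a point $a'$ that copies some $a\in M$, together with an ``anti-copy'' $a''$ lying in exactly those nouns that $a$ avoids. This raises $|x|$ and $|\bar x|$ by exactly $1$ for every noun, so all comparisons survive. The trouble is that $a''$ may break an ``All'' sentence, so $a$ has to be chosen carefully. Failing that, I would work in the region cone. The feasible real solutions form a relatively open polyhedral cone $C$ cut out by rational equalities and inequalities. The integer points of $C$ are closed under addition, so their totals form a semigroup. I would then show that for every $n\ge k$ in the subgroup generated by these totals, there is an integer point of $C$ with total $n$. The only equalities are of the form $\sum_{r\in x}c_r=\sum_{r\in y}c_r$ with unit coefficients, so this subgroup should be $\mathbb Z$ or $2\mathbb Z$. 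The parity obstruction comes exactly from equalities such as $|x|=|\bar x|$.

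I expect the main obstacle to be this ``no gaps above the minimum'' statement. Closure under addition by itself only yields an eventual tail. To get a tail starting exactly at $k$, I need a genuine local modification of a minimal model, or a careful Hilbert-basis argument showing that the total-$2$ (or total-$1$) increments are available inside $C$.
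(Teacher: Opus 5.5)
\textbf{There is a genuine gap: the key lemma is stated but never proved.}

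Your easy direction is correct; the disjoint nonempty $a_i$ work just as well as the paper's chains of strict comparisons. Your closure under disjoint unions is exactly the paper's Lemma~\ref{add}. The heart of the argument, however, is your key lemma: that $n\in Spec(\Gamma)$ implies $n+2\in Spec(\Gamma)$, and that an odd member forces every $n\geq \min Spec(\Gamma)$. Neither of your proposed routes delivers it.

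\emph{The copy/anti-copy modification fails in general.} For $\Gamma=\{\forall(x,\bar x)\}$, every point $a$ lies outside $I(x)$. Its anti-copy therefore lands in $I'(x)$ but not in $I'(\bar x)$, which falsifies the sentence. To make this work you would need a point $a$ at which every inclusion $\forall(x,y)\in\Gamma$ is tight ($a\in I(x)\iff a\in I(y)$), and no such point need exist.

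\emph{The cone/Hilbert-basis route rests on two unsupported claims.} The first is that the totals generate $\mathbb Z$ or $2\mathbb Z$. The second is that every admissible total above $k$ is realised, and that is precisely the theorem.

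\emph{The finite reduction is circular as cited.} The reduction to a finite $\Gamma_0$ is, in the paper, a corollary of this very classification. Your Venn-region description also only makes sense for finitely many nouns. This is repairable: use compactness over a fixed finite domain, plus closure of the list $\varnothing, T_k, E_k$ under intersections. The paper, though, never needs a finite reduction.

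\textbf{The missing idea} is to add new points not uniformly, but according to each noun's size relative to half the universe. $\forall(x,y)$ forces $|I(x)|\le |I(y)|$, and $\exists^{\geq}(x,y)$, $\exists^{>}(x,y)$ force $|I(x)|\geq|I(y)|$. So any rule that gives at least as many new points to a larger noun preserves all comparisons and all inclusions, except between nouns of equal size, where an inclusion means equal interpretations and hence identical treatment.

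\emph{Odd $n$.} Suppose $\langle n,I\rangle\models\Gamma$ with $n$ odd. No noun has size $n/2$, so exactly one of $x,\bar x$ is large. Adding the new points to every noun with $|I(x)|>n/2$ gives models of every size $\geq n$; this is the mechanism of Lemma~\ref{o_indiscernible}.

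\emph{Even $n$.} Add two points:
\begin{itemize}
\item none to small nouns;
\item both to large nouns;
\item one to each half-size noun.
\end{itemize}
The single point for half-size nouns is chosen via a family $\mathcal F$ containing exactly one of $A$ and $n\setminus A$ for each $n/2$-set $A$. This makes nouns with equal interpretations receive the same point, so inclusions between half-size nouns survive; this is Lemma~\ref{e_indiscernible}.

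\emph{Filling the gap.} Suppose the minimum $m$ is even and some odd $n>m$ lies in the spectrum. Use the $n$-element model only to classify nouns as small or large. Then add one point to the $m$-element model's interpretation of exactly those nouns that are large in the $n$-model. Since $\Gamma$ holds in the $n$-model, every sentence of $\Gamma$ respects this split, so $m+1\in Spec(\Gamma)$, and the odd case finishes the argument (Lemma~\ref{eo_indiscernible}). The paper packages the first two steps through the characteristic theories $\Gamma_n$ and Lemma~\ref{smallest}, but the underlying mechanism is this threshold modification, which your uniform "$+1$ to every noun" idea lacks.
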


Thus $S^{\dagger}(card)$ can express two kinds of information about the size of a finite universe: a lower bound, and even cardinality. 

The proof proceeds by constructing, for each positive integer $n$, a
canonical theory $\Gamma_n$ which records every fact expressible
in $S^\dagger(card)$ about a canonical $n$-element interpretation in
which every subset of the universe is named by a noun. We then determine
which cardinalities belong to $Spec(\Gamma_n)$. We show
that when $n$ is odd, every $m\geq n$ belongs to
$Spec(\Gamma_n)$, while when $n$ is even, every even
$m\geq n$ does. A final argument shows that if a spectrum contains an
even number $m$ and a larger odd number, then it also contains all
numbers between them. These facts yield the classification
theorem. 

We also study an extension of $S^{\dagger}(card)$, obtained by adding Boolean connectives at the sentence level of the original language. We show that the collection of spectra in this extension is exactly the topology on $\mathbb N^+$ generated by the spectra of the original language $S^{\dagger}(card)$.

The present question is related to, but should be distinguished from, frame correspondence\footnote{See, for example, Blackburn, de Rijke, and Venema~\cite{ML}.}. They are related in the sense that both study the relation between the satisfaction of a formula and the structure of the model. However, they are different in their approach. In modal correspondence theory, a formula typically characterizes a frame by being valid under every valuation on that frame. Here the relevant quantification over interpretations is existential: $n$ belongs to the spectrum of $\Gamma$ when there exists an interpretation on an $n$-element universe satisfying $\Gamma$. 

The remainder of the paper is organized as follows. Section 2 recalls the syntax and semantics of $S^\dagger(card)$. Section 3 develops the characteristic theories and proves the spectrum classification theorem. Section 4 studies the addition of Boolean connectives at the sentence level. Section 5 discusses the source of the parity phenomenon, possible extensions of the language, and the connection between the characteristic theories used here and diagram constructions in model theory.


\newpage

\section{Preliminaries}

We first give the syntax and semantics for the Syllogistic Logic with Cardinality Comparison, $S^{\dagger}(card)$ as introduced in \cite{Larry}. 

The alphabet of the language consists of the following: 
\begin{enumerate}
\item raw variables $P_0$, usually denoted by $p, q, \dots$; 
\item complemented variables $\overline{P_0}$, usually denoted by $\bar{p}, \bar{q}, \dots$;
\end{enumerate}
Together they are the nouns of the language: $P = P_0 \cup \overline{P_0}$. 
\begin{enumerate}
\item[(3)] Connectives $\forall(\cdot,\cdot)$, $\exists(\cdot,\cdot)$, $\exists^{\geq}(\cdot,\cdot)$, and $\exists^{>}(\cdot,\cdot)$, where the $\cdot$ is a place holder. 
\end{enumerate}

The sentences of the language are formed from the alphabet by inserting the nouns into the connectives.  For $x, y \in P$ the sentences of $S^{\dagger}(card)$ are:
\begin{enumerate}
\item $\forall(x, y)$, with intended meaning ``all $x$ are $y$''. 
\item $\exists(x, y)$, with intended meaning ``some $x$ are $y$''. 
\item $\exists^{\geq}(x, y)$, with intended meaning ``there are at least as many $x$ as $y$''. 
\item $\exists^{>}(x, y)$, with intended meaning ``there are more $x$ than $y$''. 
\end{enumerate}
\hfill \\ 

\begin{remark}
	We may denote a general sentence as $\phi(x, y)$ so as to talk about the nouns in it. 
\end{remark}

Given a set $P$ of nouns, we define its models. 
\begin{definition}
A \textit{model} $\mathfrak{M} = \langle M, I\rangle$ consists of a finite set $M$ and an interpretation function $I : P \to \mathcal{P}(M)$ such that $I(\bar{x}) = M \setminus I(x)$ for all raw variables $x \in P_0$. 
\end{definition}

\begin{remark}\label{bar_involution}
We extend the bar notation to all nouns by setting $\bar{\bar{p}} \mathrel{:=} p$ for
every raw variable $p \in P_0$. Then, for \emph{every} noun $x \in P$ we have
$I(\bar{x}) = M \setminus I(x)$: this holds by definition when $x \in P_0$, and
when $x = \bar{p}$ for some $p \in P_0$ we have
\[
I(\bar{x}) = I(\bar{\bar{p}}) = I(p) = M \setminus \bigl(M \setminus I(p)\bigr)
= M \setminus I(\bar{p}) = M \setminus I(x).
\]
\end{remark}

Then we can define the satisfaction relation. 

\begin{definition}
Given a sentence $\phi$, we say a model $\mathfrak{M} = \langle M, I\rangle$ \textit{satisfies} $\phi$, denoted $\mathfrak{M}  \models \phi$ if 
\begin{enumerate} 
\item $I(x) \subseteq I(y)$, if $\phi = \forall(x, y)$ for some $x, y \in P$;
\item $I(x) \cap I(y) \neq \varnothing$,  if $\phi = \exists(x, y)$ for some $x, y \in P$;
\item $|I(x)| \geq |I(y)|$,  if $\phi = \exists^{\geq}(x, y)$ for some $x, y \in P$;
\item $|I(x)| > |I(y)|$,  if $\phi = \exists^{>}(x, y)$ for some $x, y \in P$. 
\end{enumerate}
\end{definition}

\begin{remark}
This is well defined since by definition every sentence $\phi$ in the language of $S^{\dagger}(card)$ must fall into one of the four cases. 
\end{remark}

Although there is no negation in the alphabet, every sentence has a \textit{semantic
negation}, as observed in \cite{Larry}:

\begin{definition}\label{sem_neg_def}
	Let $\phi$ be a sentence in the language. Define its \textit{semantic negation} $\bar{\phi}$ to be: 
	\begin{enumerate} 
	\item $\exists(x, \bar{y})$, if $\phi = \forall(x, y)$ for some $x, y \in P$;
	\item $\forall(x, \bar{y})$,  if $\phi = \exists(x, y)$ for some $x, y \in P$;
	\item $\exists^{>}(y, x)$,  if $\phi = \exists^{\geq}(x, y)$ for some $x, y \in P$;
	\item $\exists^{\geq}(y, x)$,  if $\phi = \exists^{>}(x, y)$ for some $x, y \in P$. 
\end{enumerate}
\end{definition}

\begin{lemma}\label{sem_neg}
Let $\mathfrak{M} = \langle M, I\rangle$ be a model and let $\phi$ be a sentence of
$S^\dagger(card)$. Then
\[
\mathfrak{M} \models \bar{\phi} \quad\text{if and only if}\quad \mathfrak{M} \nvDash \phi.
\]
Moreover $\bar{\bar{\phi}} = \phi$.
\end{lemma}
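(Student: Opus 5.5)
The plan is a direct case analysis on the four syntactic forms of $\phi$ given in Definition~\ref{sem_neg_def}. For each case we unfold the satisfaction clause for $\bar{\phi}$ and compare it with the negation of the clause for $\phi$. The only facts needed are Remark~\ref{bar_involution}, i.e.\ $I(\bar{y}) = M\setminus I(y)$ for every noun $y$ and $\bar{\bar{y}} = y$, and elementary set theory and arithmetic on natural numbers.

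\emph{Case $\phi=\forall(x,y)$.} Here $\mathfrak{M}\models\bar\phi$ means $I(x)\cap I(\bar y)\neq\varnothing$. By Remark~\ref{bar_involution} this says $I(x)\cap (M\setminus I(y))\neq\varnothing$, i.e.\ some element of $I(x)$ lies outside $I(y)$, i.e.\ $I(x)\not\subseteq I(y)$. That is exactly $\mathfrak{M}\nvDash\phi$.

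\emph{Case $\phi=\exists(x,y)$.} Here $\bar\phi=\forall(x,\bar y)$ holds iff $I(x)\subseteq M\setminus I(y)$. This is equivalent to $I(x)\cap I(y)=\varnothing$, i.e.\ $\mathfrak{M}\nvDash\phi$.

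\emph{Cases $\exists^{\geq}$ and $\exists^{>}$.} We use trichotomy for the natural numbers $|I(x)|,|I(y)|$: $\neg(|I(x)|\ge|I(y)|)$ iff $|I(y)|>|I(x)|$, and $\neg(|I(x)|>|I(y)|)$ iff $|I(y)|\ge|I(x)|$. These match clauses (3) and (4) of the semantic negation.

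For the involution $\bar{\bar\phi}=\phi$ we compute syntactically. Applying the definition twice gives
\begin{itemize}
\item[] $\forall(x,y)\mapsto\exists(x,\bar y)\mapsto\forall(x,\bar{\bar y})=\forall(x,y)$, using $\bar{\bar y}=y$ from Remark~\ref{bar_involution};
\item[] $\exists(x,y)\mapsto\forall(x,\bar y)\mapsto\exists(x,\bar{\bar y})=\exists(x,y)$, likewise;
\item[] $\exists^{\ge}(x,y)\mapsto\exists^{>}(y,x)\mapsto\exists^{\ge}(x,y)$, since the arguments are swapped twice;
\item[] $\exists^{>}(x,y)\mapsto\exists^{\ge}(y,x)\mapsto\exists^{>}(x,y)$, likewise.
\end{itemize}

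There is no real obstacle. The one point needing care is that $\bar y$ for a complemented variable $y=\bar p$ must be read via the convention $\bar{\bar p}:=p$, so that $\bar{y}$ is again a noun and both $I(\bar y)=M\setminus I(y)$ and $\bar{\bar y}=y$ hold for every noun. This is exactly what Remark~\ref{bar_involution} supplies.
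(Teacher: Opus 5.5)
Your proposal is correct and follows essentially the same route as the paper. Both proofs check the four clauses of Definition~\ref{sem_neg_def} using Remark~\ref{bar_involution}, settle the cardinality cases by the linear order of the finite cardinalities, and verify $\bar{\bar\phi}=\phi$ syntactically (you write out all four involution cases, whereas the paper does one and calls the rest similar).
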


\begin{proof}
By Remark~\ref{bar_involution} we may take $x, y$ to range over all of $P$. We check the
four cases of Definition~\ref{sem_neg_def}.

If $\phi = \forall(x, y)$, then $\mathfrak{M} \nvDash \phi$ iff
$I(x) \not\subseteq I(y)$ iff $I(x) \cap (M \setminus I(y)) \neq \varnothing$ iff
$I(x) \cap I(\bar{y}) \neq \varnothing$ iff
$\mathfrak{M} \models \exists(x, \bar{y}) = \bar{\phi}$.

If $\phi = \exists(x, y)$, then $\mathfrak{M} \nvDash \phi$ iff
$I(x) \cap I(y) = \varnothing$ iff $I(x) \subseteq M \setminus I(y) = I(\bar{y})$ iff
$\mathfrak{M} \models \forall(x, \bar{y}) = \bar{\phi}$.

If $\phi = \exists^{\geq}(x, y)$, then, since $M$ is finite, $\mathfrak{M} \nvDash \phi$
iff $|I(x)| < |I(y)|$ iff $|I(y)| > |I(x)|$ iff
$\mathfrak{M} \models \exists^{>}(y, x) = \bar{\phi}$.

If $\phi = \exists^{>}(x, y)$, then likewise $\mathfrak{M} \nvDash \phi$ iff
$|I(x)| \leq |I(y)|$ iff $\mathfrak{M} \models \exists^{\geq}(y, x) = \bar{\phi}$.

For the last claim, $\overline{\overline{\forall(x,y)}} = \overline{\exists(x,\bar{y})}
= \forall(x, \bar{\bar{y}}) = \forall(x, y)$ by Remark~\ref{bar_involution}, and the
remaining three cases are similar.
\end{proof}

By a \emph{theory} we mean an arbitrary set of sentences; no closure
under semantic consequence is assumed. We can generalise the satisfaction relation to a theory: 

\begin{definition}
Given a theory $\Gamma$, we say a model $\mathfrak{M} = \langle M, I\rangle$ \textit{satisfies} $\Gamma$, denoted $\mathfrak{M}  \models \Gamma$ if for all $\phi \in \Gamma$, we have $\mathfrak{M}  \models \phi$. 
\end{definition}

Then we get the semantical consequence relation. \footnote{See \cite{Larry} for a logical system which is sound and complete with respect to this semantics.}

\begin{definition}
Given a theory $\Gamma$, we say a sentence $\phi$ is a \textit{consequence} of $\Gamma$, denoted $\Gamma \models \phi$ if for all model $\mathfrak{M}$ such that $\mathfrak{M}  \models \Gamma$, we have $\mathfrak{M}  \models \phi$. We write $\psi \models \phi$ for $\{\psi\} \models \phi$. 
\end{definition}

Lastly, for any theory $\Gamma$, we may ask whether there exists a model that satisfies $\Gamma$. 

\begin{definition}
A theory $\Gamma$ is \textit{satisfiable} if there exists a model $\mathfrak{M}$ such that $\mathfrak{M}  \models \Gamma$. Equivalently, $\Gamma$ is satisfiable if there exists a set $M$ such that $\mathfrak{M} = \langle M, I\rangle$ satisfies $\Gamma$ for some interpretation function $I$. In the latter case we will say $\Gamma$ is satisfiable on $M$, the underlying set. 
\end{definition}

\newpage

\section{Main results}
\subsection{Key Observation and Definitions} \hfill \\ 

Consider the theory $\Gamma = \{\exists^{\geq}(x, \bar{x}), \exists^{\geq}(\bar{x}, x)\}$. On which sets is $\Gamma$ satisfiable? \\ 

Suppose there is a model $\mathfrak{M} = \langle M, I\rangle$ with
$\mathfrak{M} \models \Gamma$. Then $|I(x)| \geq |I(\bar{x})|$ and
$|I(\bar{x})| \geq |I(x)|$, so $|I(x)| = |I(\bar{x})|$. Since $I(\bar{x}) = M \setminus I(x)$,
the sets $I(x)$ and $I(\bar{x})$ partition $M$, so
$|M| = |I(x)| + |I(\bar{x})| = 2|I(x)|$. Therefore, $|M|$ is even, and
$|I(x)| = |I(\bar{x})| = \frac{1}{2}|M|$. Thus,
$\{\exists^{\geq}(x, \bar{x}), \exists^{\geq}(\bar{x}, x)\}$ is satisfiable only on sets
of even cardinality.

Conversely, it is easy to see that for any set $M$ with even cardinality, we can define an interpretation function $I$ such that $\langle M, I\rangle \models \Gamma$. Therefore, the theory $\{\exists^{\geq}(x, \bar{x}), \exists^{\geq}(\bar{x}, x)\}$ encodes some information about the structure of our model --- namely, the model has even cardinality.

This naturally raises more questions. If $S^{\dagger}(card)$ can distinguish the even cardinalities, can it similarly distinguish the cardinalities divisible by $3$? More generally, which subsets of $\mathbb N^+$ can occur as the cardinalities of models satisfying a $S^{\dagger}(card)$-theory?

To formulate this question precisely, we adapt the standard notion of the finite spectrum of a first-order sentence to the present language. Since $S^{\dagger}(card)$ does not contain conjunction at the sentence level, it is natural to define spectra for sets of sentences rather than for individual sentences.

\begin{definition}[Spectrum in $S^{\dagger}(card)$]\label{spec_def}
\[
	Spec(\Gamma) = \{n \in \mathbb{N}^+ : \Gamma \text{ has a model of cardinality } n\}.
\]
\end{definition}

\begin{definition}
A subset $U \subseteq \mathbb{N}^+$ is an $S^{\dagger}(card)$-spectrum if $U = Spec(\Gamma)$ for some theory $\Gamma$ of $S^{\dagger}(card)$. 
\end{definition}

Our aim is to characterize all $S^{\dagger}(card)$-spectra. For simplicity, we will simply write ``spectrum'' for $S^{\dagger}(card)$-spectrum in the remainder of the paper, except in the section where we consider the extension of $S^{\dagger}(card)$ by Boolean connectives. Context will make it clear which language we are referring to.

In this language, models of the same cardinality are not distinguishable: 

\begin{lemma}\label{card_essence}
	If $\Gamma$ is satisfiable on a set $M_1$, then for any set $M$ with $|M| = |M_1|$, $\Gamma$ is satisfiable on $M$. 
\end{lemma}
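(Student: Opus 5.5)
The plan is to transport a model along a bijection. Suppose $\mathfrak{M}_1 = \langle M_1, I_1\rangle \models \Gamma$, and let $M$ be any set with $|M| = |M_1|$. Then there is a bijection $f : M_1 \to M$. Define $I : P \to \mathcal{P}(M)$ by $I(x) = f[I_1(x)]$ for every noun $x$. The goal is to check that $\langle M, I\rangle$ is a model and that it satisfies every $\phi \in \Gamma$.

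First, $I$ respects complements. Since $f$ is a bijection, images commute with set difference, so
\[
I(\bar{x}) = f[M_1 \setminus I_1(x)] = M \setminus f[I_1(x)] = M \setminus I(x)
\]
for each raw variable $x$. Hence $\langle M, I\rangle$ is a model in the sense of the definition.

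Second, each sentence form is preserved, by cases on the four kinds of sentence.
\begin{enumerate}
\item For $\forall(x,y)$: images under an injection preserve and reflect inclusion.
\item For $\exists(x,y)$: $f[A]\cap f[B] = f[A\cap B]$ because $f$ is injective, and $f[C] = \varnothing$ iff $C = \varnothing$.
\item For $\exists^{\geq}(x,y)$ and $\exists^{>}(x,y)$: $|f[A]| = |A|$ for every $A \subseteq M_1$, again since $f$ is injective.
\end{enumerate}
Thus $\langle M_1, I_1\rangle \models \phi$ iff $\langle M, I\rangle \models \phi$ for every sentence $\phi$. In particular $\langle M, I\rangle \models \Gamma$.

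There is no real obstacle here. The only points needing care are that injectivity is what makes intersections and cardinalities commute with $f$, and that surjectivity is what gives the complement identity.
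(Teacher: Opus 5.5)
Your proof is correct and is essentially the paper's argument: the paper takes a bijection $f: M \to M_1$ and sets $I(x) = f^{-1}[I_1(x)]$, which is your image construction with the bijection reversed, and you spell out the verification the paper calls easy. One small quibble with your closing remark: the complement identity $f[M_1\setminus A] = M\setminus f[A]$ needs injectivity (to get disjointness from $f[A]$) as well as surjectivity, though this is harmless since $f$ is a bijection.
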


\begin{proof}
	Let $I_1$ be an interpretation function such that $\langle M_1, I_1\rangle \models \Gamma$, Since $|M| = |M_1|$, there exists a bijection $f: M \to M_1$. Then it is easy to verify that $I(x) = f^{-1}[I_1(x)]$ satisfies $\langle M, I\rangle \models \Gamma$. 
\end{proof}

Therefore, the particular choice of an underlying $n$-element set is not important, and we can take the canonical choice $n = \{0, 1, \dots, n-1\}$ as the representing set for all sets with that cardinality.

\begin{corollary}
	For any $\Gamma$, $Spec(\Gamma) = \{n \in \mathbb{N}^+ \mid \mbox{$\Gamma$ is satisfiable on $n = \{0, \dots, n-1\}$} \}$.
\end{corollary}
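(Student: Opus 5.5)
The corollary is immediate from Lemma~\ref{card_essence} together with Definition~\ref{spec_def}, so the plan is simply to prove the two inclusions.

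\emph{Left to right.} Take $n \in Spec(\Gamma)$. By definition there is a model $\langle M_1, I_1\rangle \models \Gamma$ with $|M_1| = n$. The canonical set $\{0, \dots, n-1\}$ also has cardinality $n$. Lemma~\ref{card_essence}, applied with $M = \{0,\dots,n-1\}$, then gives an interpretation $I$ on this set with $\langle \{0,\dots,n-1\}, I\rangle \models \Gamma$. Hence $\Gamma$ is satisfiable on $n$.

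\emph{Right to left.} Suppose $\Gamma$ is satisfiable on $\{0,\dots,n-1\}$. That satisfying model is itself a model of $\Gamma$ of cardinality $n$, so $n \in Spec(\Gamma)$ directly by Definition~\ref{spec_def}. No appeal to the lemma is needed here.

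There is no real obstacle. The only point to watch is that $n$ ranges over $\mathbb{N}^+$, so the canonical set is nonempty and is a legitimate finite underlying set for a model. The transport of structure along a bijection, which does the real work, is already handled in Lemma~\ref{card_essence}: preimages under a bijection preserve inclusion, nonempty intersection, cardinality, and complements, so the condition $I(\bar{x}) = M \setminus I(x)$ is respected.
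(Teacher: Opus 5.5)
Your proof is correct and matches the paper's argument, which simply says the corollary follows from Lemma~\ref{card_essence} and the definition of $Spec(\Gamma)$. Your two inclusions just spell that out: the lemma gives the left-to-right direction, and the right-to-left direction holds by definition.
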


\begin{proof}
	Easily follows from the previous lemma and definition. 
\end{proof}

\begin{example} 
	\begin{enumerate}
		\item The set of even numbers is the spectrum of $\{\exists^{\geq}(x, \bar{x}), \exists^{\geq}(\bar{x}, x)\}$ by the opening remark. 
		\item The empty set is the spectrum of $\{\exists^{>}(x, x)\}$. In fact, it is the spectrum of any contradictory set of sentences. 
		\item For all $n \in \mathbb{N}^+$, $\{i \mid i \geq n\}$ is the spectrum of $\{\exists^{>}(x_{1}, x_{0}), \dots, \exists^{>}(x_{n}, x_{n-1})\}$. 
	\end{enumerate}
\end{example}

\hfill \\ 

\subsection{Properties of Spectra} \hfill \\ 

We now prove some properties of spectra. First of all, the intersection of spectra is a spectrum. 

\begin{lemma}\label{intersection}
	Let $U_1$ and $U_2$ be spectra. Then $U_1 \cap U_2$ is a spectrum.
\end{lemma}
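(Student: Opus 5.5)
Let $U_1 = Spec(\Gamma_1)$ and $U_2 = Spec(\Gamma_2)$. The plan is to take the union of the two theories after renaming variables so that they share no raw variables. The spectrum of this combined theory should be $U_1 \cap U_2$.

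The first step is the renaming, which requires an injection from the raw variables of $\Gamma_1$ and $\Gamma_2$ into $P_0$ with disjoint images. If $P_0$ is countably infinite, this is a standard even/odd split. Map the raw variables of $\Gamma_1$ injectively into one half of $P_0$ and those of $\Gamma_2$ into the other half. Extend each map to complemented variables by $\bar{p} \mapsto \overline{f(p)}$. Apply the maps to obtain theories $\Gamma_1'$ and $\Gamma_2'$.

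Next I check that renaming does not change spectra, i.e.\ $Spec(\Gamma_i') = Spec(\Gamma_i)$. Suppose $\langle M, I\rangle \models \Gamma_i$. Define $I'(f(p)) = I(p)$ on the image of $f$, and set $I'$ arbitrarily elsewhere, say to $\varnothing$. Then $I'(\overline{f(p)}) = M \setminus I(p) = I(\bar p)$, so each renamed sentence receives exactly the same truth conditions. The converse direction is symmetric, using $I(p) = I'(f(p))$.

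Finally, I show $Spec(\Gamma_1' \cup \Gamma_2') = U_1 \cap U_2$. The inclusion $\subseteq$ is immediate, since any model of the union is a model of each part. For $\supseteq$, take $n \in U_1 \cap U_2$. By the Corollary, there are interpretations $I_1$ and $I_2$ on the set $n$ satisfying $\Gamma_1'$ and $\Gamma_2'$ respectively. Define $I$ to agree with $I_1$ on the raw variables of $\Gamma_1'$, with $I_2$ on those of $\Gamma_2'$, and arbitrarily elsewhere. This is well defined because the two sets of variables are disjoint. Each sentence of the union mentions only variables from one side, so it is satisfied.

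The only real obstacle is the disjointness step, since it needs enough raw variables. If $P_0$ were finite and both theories used all of it, the argument would break. I would assume, as is standard, that $P_0$ is infinite, or at least that the language comes with enough fresh variables, and state this explicitly.
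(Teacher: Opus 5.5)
Your proposal is correct and follows the paper's own proof. Both arguments rename so that $\Gamma_1$ and $\Gamma_2$ share no raw variables, note that renaming does not change spectra, and show $Spec(\Gamma_1 \cup \Gamma_2) = U_1 \cap U_2$ by merging the two interpretations on $n$. You just spell out the renaming step and the need for enough fresh raw variables, which the paper takes for granted.
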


\begin{proof}
	Let $U_1 = Spec(\Gamma_1)$ and $U_2 = Spec(\Gamma_2)$. Uniformly renaming variables if necessary, we may assume that $\Gamma_1$ and $\Gamma_2$ use disjoint sets of raw variables. This clearly has no effect on their spectra. Then we show that $U_1 \cap U_2 = Spec(\Gamma_1 \cup \Gamma_2)$. 

	$Spec(\Gamma_1 \cup \Gamma_2) \subseteq U_1 \cap U_2$ is immediate, since if $\Gamma_1 \cup \Gamma_2$ is satisfiable on $n$, then both $\Gamma_1$ and $\Gamma_2$ are satisfiable on $n$.
	
	Conversely, let $n\in U_1\cap U_2$, and let $I_1$ and $I_2$ be such that $\langle n, I_1\rangle \models \Gamma_1$ and $\langle n, I_2\rangle \models \Gamma_2$. Since the variables are disjoint, we can combine $I_1$ and $I_2$ into an interpretation satisfying $\Gamma_1\cup\Gamma_2$.
\end{proof}

The next lemma is strictly weaker than the main result we will prove in the next section. Also, the proof for that result does not depend on this one. However, besides the fact that this lemma is found first and can already answer some questions regarding the classification of spectra, we include it here also due to the different construction involved in the proof. 

\begin{lemma}\label{add}
	Let $U \subseteq \mathbb{N}^+$ be a spectrum. If $m, n \in U$, then $m+n \in U$. 
\end{lemma}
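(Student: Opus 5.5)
The plan is to glue a model of size $m$ and a model of size $n$ into a model of size $m+n$ by taking their disjoint union and interpreting each noun as the union of its two interpretations. Let $U = Spec(\Gamma)$, and fix $\langle m, I_1\rangle \models \Gamma$ and $\langle n, I_2\rangle \models \Gamma$. Take $M = m \sqcup n$, say $M = \{0,\dots,m+n-1\}$ with the second copy shifted by $m$. For each raw variable $p$ put
\[
I(p) = I_1(p) \sqcup I_2(p),
\]
with the second summand shifted. The first step is to check that complements are respected. Indeed $M\setminus I(p) = (m\setminus I_1(p)) \sqcup (n\setminus I_2(p)) = I_1(\bar p)\sqcup I_2(\bar p)$, so the same union formula holds for every noun $x\in P$. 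Thus $I$ is a legitimate interpretation, and $|I(x)| = |I_1(x)| + |I_2(x)|$ for all $x$.

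Next we verify each sentence $\phi\in\Gamma$ by cases.

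\textbf{$\forall(x,y)$.} If $I_1(x)\subseteq I_1(y)$ and $I_2(x)\subseteq I_2(y)$, then the unions satisfy $I(x)\subseteq I(y)$.

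\textbf{$\exists(x,y)$.} Any witness in $I_1(x)\cap I_1(y)$ lies in $I(x)\cap I(y)$.

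\textbf{$\exists^{\geq}(x,y)$.} The inequalities $|I_1(x)|\ge|I_1(y)|$ and $|I_2(x)|\ge|I_2(y)|$ add to give $|I(x)|\ge|I(y)|$.

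\textbf{$\exists^{>}(x,y)$.} Two strict inequalities add to a strict inequality, so $|I(x)|>|I(y)|$.

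Hence $\langle M, I\rangle\models\Gamma$ and $m+n\in U$.

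There is no real obstacle here. The only points needing care are these: the complement condition must be preserved by the disjoint-union interpretation, which is exactly why we use a disjoint union rather than overlapping copies. Also, every sentence form of $S^\dagger(card)$ must be preserved under this "sum" of models. Universal and existential statements are preserved because they are respectively closed under unions and witnessed locally, and the comparisons are preserved because the cardinalities add.
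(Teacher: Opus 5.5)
Your proposal is correct and follows the paper's proof: it takes the disjoint union of an $m$-element and an $n$-element model of $\Gamma$ and interprets each noun as the disjoint union of its two interpretations. It then checks that complementation and each of the four sentence forms are preserved, with cardinalities adding for the two comparison forms.
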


\begin{proof}
	Let $U = Spec(\Gamma)$. We prove $\Gamma$ is satisfiable on $m \sqcup n = (m \times \{0\}) \cup (n \times \{1\})$. Let $\langle m, I_1\rangle \models \Gamma$ and $\langle n, I_2\rangle \models \Gamma$. Then for every noun $x$ occurring in $\Gamma$, define $I(x)= I_1(x) \sqcup I_2(x) = (I_1(x) \times\{0\})\cup(I_2(x)\times \{1\})$. 
	
	This is indeed an interpretation function because $I(x) \cup I(\bar{x}) = (I_1(x) \sqcup I_2(x)) \cup (I_1(\bar{x}) \sqcup I_2(\bar{x})) = (I_1(x) \cup I_1(\bar{x})) \sqcup (I_2(x) \cup I_2(\bar{x})) = m \sqcup n$ and $I(x) \cap I(\bar{x}) = (I_1(x) \sqcup I_2(x)) \cap (I_1(\bar{x}) \sqcup I_2(\bar{x})) = (I_1(x) \cap I_1(\bar{x})) \sqcup (I_2(x) \cap I_2(\bar{x})) = \varnothing \sqcup \varnothing = \varnothing$. 
	
	We claim that $\langle m \sqcup n, I\rangle \models \Gamma$. This is proven by cases. Let $\phi \in \Gamma$. 
	\begin{enumerate} 
	\item If $\phi = \forall(x, y)$ for some $x, y \in P$. Since $\langle m, I_1\rangle \models \Gamma$ and $\langle n, I_2\rangle \models \Gamma$, we have $I_1(x) \subseteq I_1(y)$ and $I_2(x) \subseteq I_2(y)$. Thus $I(x) = I_1(x) \sqcup I_2(x) \subseteq I_1(y) \sqcup I_2(y) = I(y)$. 
	\item If $\phi = \exists(x, y)$ for some $x, y \in P$. Since $\langle m, I_1\rangle \models \Gamma$ and $\langle n, I_2\rangle \models \Gamma$, we have $I_1(x) \cap I_1(y) \neq \varnothing$ and $I_2(x) \cap I_2(y) \neq \varnothing$. Thus $I(x) \cap I(y) \neq \varnothing$. 
	\item If $\phi = \exists^{\geq}(x, y)$ for some $x, y \in P$. Then $|I_1(x)| \geq |I_1(y)|$ and $|I_2(x)| \geq |I_2(y)|$. Therefore $|I(x)| = |I_1(x)| + |I_2(x)| \geq |I_1(y)| + |I_2(y)|$. 
	\item If $\phi = \exists^{>}(x, y)$ for some $x, y \in P$. Then $|I_1(x)| > |I_1(y)|$ and $|I_2(x)| > |I_2(y)|$. Therefore $|I(x)| = |I_1(x)| + |I_2(x)| > |I_1(y)| + |I_2(y)|$.
	\end{enumerate}
\end{proof}

From the proof we actually get a more general result: 
\begin{lemma}
For any theory $\Gamma$, if $\mathfrak{M}_1 \models \Gamma$ and $\mathfrak{M}_2 \models \Gamma$, then their disjoint union $(M, I)$ also satisfies $\Gamma$, where $M = (M_1\times\{0\})\cup(M_2\times\{1\})$ and $I(x) = \bigl(I_1(x)\times\{0\}\bigr)
\cup
\bigl(I_2(x)\times\{1\}\bigr)$ for all nouns $x$.
\end{lemma}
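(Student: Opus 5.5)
The plan is to extract the general statement from the proof of Lemma~\ref{add}, which never used the fact that the two universes were specifically $m$ and $n$.

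\textbf{Step 1: $I$ is an interpretation on $M$.} Take $M=(M_1\times\{0\})\cup(M_2\times\{1\})$ and define $I(x)$ as in the statement for every noun $x$. Using Remark~\ref{bar_involution}, $I_j(\bar x)=M_j\setminus I_j(x)$ for all nouns $x$ and $j=1,2$. Because the two copies are disjoint and the complement is taken coordinatewise,
\[
I(\bar x)=\bigl((M_1\setminus I_1(x))\times\{0\}\bigr)\cup\bigl((M_2\setminus I_2(x))\times\{1\}\bigr)=M\setminus I(x).
\]
So $I$ respects complementation, and $\langle M,I\rangle$ is a model in the sense of the definition.

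\textbf{Step 2: each sentence of $\Gamma$ holds.} Fix $\phi\in\Gamma$ and split into the four sentence forms, exactly as in the proof of Lemma~\ref{add}.
\begin{enumerate}
\item For $\forall(x,y)$: inclusion holds in each component, and the disjoint union preserves inclusion.
\item For $\exists(x,y)$: the intersection $I_1(x)\cap I_1(y)$ is nonempty, so some $(a,0)$ lies in $I(x)\cap I(y)$. Only one of the two models is actually needed here.
\item For $\exists^{\geq}(x,y)$ and $\exists^{>}(x,y)$: since the union is disjoint, $|I(x)|=|I_1(x)|+|I_2(x)|$. Adding the two inequalities $|I_j(x)|\ge|I_j(y)|$ (respectively $>$) gives the desired inequality for $I$.
\end{enumerate}

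There is no real obstacle here. The only point needing care is that the definition of $I$ must cover every noun, not just those occurring in $\Gamma$, so that the complement condition of Step 1 is checked globally. Finiteness of $M$ is immediate because $M_1$ and $M_2$ are finite.
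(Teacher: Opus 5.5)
Your proposal is correct and follows essentially the same route as the paper, which proves this lemma by pointing back to the proof of Lemma~\ref{add}. That proof also first checks that the componentwise definition is an interpretation and then verifies the four sentence forms case by case. The only cosmetic difference is that you verify $I(\bar x)=M\setminus I(x)$ directly, whereas the paper checks that $I(x)\cup I(\bar x)$ is the whole universe and $I(x)\cap I(\bar x)=\varnothing$.
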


\begin{proof}
This is exactly what was shown in the proof of Lemma~\ref{add}.
\end{proof}

With these results, we can already obtain a negative result regarding the spectra, namely, that the odd numbers are not a spectrum. 

\begin{corollary}
	The odd numbers are not a spectrum. 
\end{corollary}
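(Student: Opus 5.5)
The plan is a direct argument by contradiction from Lemma~\ref{add}. Suppose the set $O=\{1,3,5,\dots\}$ of odd positive integers were a spectrum, say $O = Spec(\Gamma)$ for some theory $\Gamma$. Then $1\in Spec(\Gamma)$, because $1$ is odd. Applying Lemma~\ref{add} with $m=n=1$ gives $2 = 1+1 \in Spec(\Gamma)$. But $2$ is even, so $2\notin O$, which contradicts $O=Spec(\Gamma)$.

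More generally, any two elements of $O$ would have an even sum lying in the spectrum. That sum cannot lie in $O$, so $O$ is not closed under addition. Lemma~\ref{add} says every spectrum is closed under addition, so $O$ cannot be a spectrum.

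The only point to check is that $O$ is nonempty and has two elements, possibly equal, whose sum leaves $O$; the pair $1,1$ suffices. Lemma~\ref{add} places no distinctness requirement on $m$ and $n$: its proof uses the disjoint union $m\sqcup n$, which is fine when $m=n$. There is therefore no real obstacle, and all the substance lies in the disjoint-union construction already carried out in Lemma~\ref{add}.
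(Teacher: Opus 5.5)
Your proof is correct and is essentially the paper's argument: both derive a contradiction from closure of spectra under addition (Lemma~\ref{add}). The only difference is that the paper uses $1+3=4$ while you use $1+1=2$, which is equally valid because the disjoint-union construction works when $m=n$.
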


\begin{proof}
	Suppose for contradiction that the positive odd integers form a spectrum $U$. Since $1,3\in U$, Lemma~\ref{add} gives $4=1+3\in U.$ However, $4$ is not odd, a contradiction.
\end{proof}

\begin{remark}
This might be a surprise, since it is tempting to think that if the spectrum of $\Gamma$ is $U$, then the spectrum of $\overline{\Gamma} \mathrel{:=} \{\bar{\phi} \mid \phi \in \Gamma\}$ may give $\mathbb{N}^+\setminus U$. However, this is not the case, and some examples make it clear:

\begin{enumerate}
	\item Let $\Gamma = \{\exists^{\geq}(x, \bar{x}), \exists^{\geq}(\bar{x}, x)\}$. Then $Spec(\Gamma) = \{n \in \mathbb{N}^+ \mid n\text{ is even}\}$, while $Spec(\overline{\Gamma}) = Spec(\{\exists^{>}(\bar{x}, x), \exists^{>}(x, \bar{x})\}) = \varnothing$.
	\item Let $\Gamma = \{\exists^{>}(x, y)\}$. Then $Spec(\Gamma) = \{n \in \mathbb{N}^+ \mid n \geq 1\}$, while $Spec(\overline{\Gamma}) = Spec(\{\exists^{\geq}(y, x)\}) = \mathbb{N}^+$.
	\item Let $\Gamma = \{\exists^{>}(x, y), \exists^{>}(y, z)\}$. Then $Spec(\Gamma) = \{n \in \mathbb{N}^+ \mid n \geq 2\}$, while $Spec(\overline{\Gamma}) = Spec(\{\exists^{\geq}(y, x), \exists^{\geq}(z, y)\}) = \mathbb{N}^+$.
\end{enumerate}

This is because of the order of quantification over interpretations and sentences: by
Lemma~\ref{sem_neg},
\[
n \notin Spec(\Gamma) \iff \forall I\, \exists \phi \in \Gamma\,
\bigl(\langle n, I\rangle \models \bar{\phi}\bigr),
\]
while
\[
n \in Spec(\{\bar{\phi} \mid \phi \in \Gamma\}) \iff \exists I\, \forall \phi \in \Gamma\,
\bigl(\langle n, I\rangle \models \bar{\phi}\bigr).
\]

This is related to Asser's question of whether the complement of every first-order spectrum is again a first-order spectrum~\cite{Asser1955,SpectrumSurvey}. As we just saw, the analogue of Asser's problem in this language has a negative answer: the even positive integers form an $S^\dagger(card)$-spectrum, whereas their complement in $\mathbb N^+$, the odd positive integers, does not.

\end{remark}

\hfill \\ 
\subsection{Characteristic Theories} \hfill \\ 

The above properties answer negatively the question whether all subsets are spectra, thus putting some limit on the expressive power of the language $S^{\dagger}(card)$. However, they are not strong enough to answer the question whether the positive integers divisible by three form a spectrum. We give an attempt to answer this question to motivate the definition of the characteristic theory of a number. 

Let $U = \{3n \mid n \in \mathbb{N}^+\}$, and we want to find a theory $\Gamma$ such that $U = Spec(\Gamma)$. Of course, $3 \in U$. Moreover, since we want to include only numbers divisible by $3$, it is certainly an element with some considerable importance. Therefore, let us try to give as detailed a description of the set $3=\{0,1,2\}$ as the language $S^\dagger(card)$ permits. For this purpose, let us introduce a raw variable for every subset of $3$ --- $P_{-1} = \mathcal{P}(3)$. However, since we can negate the raw variable, for each pair of disjoint sets $S_1$ and $S_2$ with $S_1 \cup S_2 = 3$, we only need to introduce one of them as a raw variable, and the other one will be taken care of by the negation. We choose to introduce the smaller set as a raw variable. Thus, let $P_{0} = \{S \subseteq 3 \mid |S| \leq 1\} = \{\{0\}, \{1\}, \{2\}, \varnothing\}$. Let $P$ be the resulting set of nouns. Let $I$ be the interpretation such that for any $S \in P_0$, $I(S) = S \subseteq 3$ and take the set complement for their negations. In this way, every subset of the universe is named by a noun. Now, let us use the language to say everything that is true of them: let $\Gamma = \{\phi(x, y) \mid x, y \in P, \langle 3, I\rangle \models \phi \}$. This is the best description we can give for the set $3$. We can ask, what is $Spec(\Gamma)$? 

By Lemma~\ref{add}, $U \subseteq Spec(\Gamma)$. On top of that, we know $\langle 3, I\rangle \models \exists^{>}(\{0, 1, 2\}, \{0, 1\})$, $\langle 3, I\rangle \models \exists^{>}(\{0, 1\}, \{0\})$, and $\langle 3, I\rangle \models \exists(\{0\},\{0\})$. Thus, the universe of any model of $\Gamma$ contains sets whose cardinalities satisfy $1\leq |I(\{0\})| < |I(\{0,1\})| < |I(\{0,1,2\})|.$ Therefore, every model of $\Gamma$ has at least three elements, so we have $Spec(\Gamma) \subseteq \{n \mid n \geq 3\}$. 

Can we strengthen the last inclusion to get $Spec(\Gamma) \subseteq \{3n \mid n \geq 1\}$? The answer is no. Consider $4 = \{0, 1, 2, 3\}$ with the same interpretation $I(S) = S$ if $S \in P_0$ and $I(\bar{S}) = 4 \setminus S$ if $\bar{S} \in P_1$. One can check that $ \langle 4, I\rangle \models \Gamma$ (we will give a proof for a more general result below). 

Therefore, even the best description of $3$ in the language cannot distinguish between $3$ and $4$, so it is not possible for any description to do the job (we will also make precise the generalised version of this statement and give a proof).

In this manner, we can prove that the numbers divisible by $3$ do not form a spectrum. A closer analysis shows that in the above arguments, only some properties of $3$ and $4$ are essential. We can thus generalise the arguments for stronger indistinguishability results. It turns out that this is the last step toward a complete classification of spectra in the language $S^{\dagger}(card)$.

We first make precise what we mean by the best description of a given set, and prove that it really is the ``best'' possible. In fact, by Lemma~\ref{card_essence}, only the cardinality of a set matters, so we can choose $n = \{0, 1, \dots, n-1\}$ as the representing set for all sets with that cardinality.

In the following construction, certain subsets of $n$ themselves are taken as raw variables. Thus, the elements of $P_0$ are literally subsets of $n$, and under the canonical interpretation each such raw variable $S$ is interpreted as the subset $S$ itself.

We will denote the collection of all subsets of $n$ with cardinality $k$ by $\binom{n}{k}$.

\begin{definition}\label{char_theory}
Let $n \in \mathbb{N}^+$. If $n$ is odd, then let $P_{0} = \{S \subseteq n \mid |S| < \frac{n}{2}\}$, the negated literals be $P_1$, and the resulting nouns be $P$.  If $n$ is even, then let $\mathcal F_n \subseteq \binom{n}{n/2}$ such that for any $A \in \binom{n}{n/2}$, either $A \in \mathcal F_n$ or $n \setminus A \in \mathcal F_n$, but not both\footnote{For a concrete constructive choice of $\mathcal F_n$, one might take $\left\{
A\in\binom{n}{n/2}:0\in A
\right\}$, since for every $A \in \binom{n}{n/2}$, either $0 \in A$ or $0 \in n \setminus A$.}. Let $P_{0} = \{S \subseteq n \mid |S| < \frac{n}{2}\} \cup \mathcal F_n$, the negated literals be $P_1$, and the resulting nouns be $P$.

Define the interpretation function for $n$ in the most natural way: let $I_n: P \to \mathcal{P}(n)$ be such that $I_n(S) = S$ for all $S \in P_0$, and $I_n(\bar{S}) = n \setminus S$ for all $\bar{S} \in P_1$. Note that $I_n$ is a bijection, so we will denote its inverse by $I_n^{-1}$. Thus, for every $A\subseteq n$, $I_n^{-1}(A)$ is the unique noun whose
interpretation under the canonical interpretation $I_n$ is $A$.

We define the \textit{characteristic theory} of $n$ as $\Gamma_n = \{\phi(x, y) \mid x, y \in P,  \langle n, I_n \rangle \models \phi(x, y)\}$. 
\end{definition}

Any characteristic theory $\Gamma_n$ induces a spectrum $Spec(\Gamma_n)$.

Then $\Gamma_n$ is the best description in $S^{\dagger}(card)$ for $n$ because it excludes other cardinalities as far as the expressive resources of the language permit. Put another way, all potential descriptions of $n$ include the spectrum $Spec(\Gamma_n)$, possibly with some other redundancies: 

\begin{lemma}\label{smallest}
Let $\Gamma$ be any $S^\dagger(card)$-theory. Suppose $n\in Spec(\Gamma)$. Then we have $Spec(\Gamma_n)
\subseteq Spec(\Gamma).$
\end{lemma}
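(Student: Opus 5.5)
The plan is to build a model of $\Gamma$ on $m$ by composing a model of $\Gamma_n$ on $m$ with the canonical interpretation $I_n$ transported to $\Gamma$. Fix an interpretation $J$ on $n = \{0,\dots,n-1\}$ with $\langle n, J\rangle \models \Gamma$; one exists by the Corollary following Lemma~\ref{card_essence}, since $n \in Spec(\Gamma)$. Let $m \in Spec(\Gamma_n)$, and fix an interpretation $K$ of the nouns of $\Gamma_n$ on $m$ with $\langle m, K\rangle \models \Gamma_n$. We must show that $\Gamma$ is satisfiable on $m$.

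Define an interpretation $I$ of the nouns of $\Gamma$ on $m$ by translating each noun $x$ of $\Gamma$ into the noun of $\Gamma_n$ that names the same subset of $n$:
\[
I(x) = K\bigl(I_n^{-1}(J(x))\bigr).
\]
This is well defined because $I_n$ is a bijection from the nouns of $\Gamma_n$ onto $\mathcal P(n)$. Write $\tau(x) = I_n^{-1}(J(x))$. First we check that $I$ respects complements. We have $J(\bar x) = n \setminus J(x)$. By the construction of $P_0$ (all sets of size $< n/2$, together with $\mathcal F_n$ in the even case), the noun naming $n\setminus A$ is the bar of the noun naming $A$. Hence $\tau(\bar x) = \overline{\tau(x)}$, and so $I(\bar x) = m \setminus I(x)$ by Remark~\ref{bar_involution} applied to $K$. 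It suffices to define $I$ on the raw variables and let the complements follow; the remark ensures this agrees with the formula on barred nouns.

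Next, take $\phi(x,y) \in \Gamma$. Since $\langle n, J\rangle \models \phi(x,y)$ and $I_n(\tau(x)) = J(x)$, $I_n(\tau(y)) = J(y)$, the truth condition depends only on the interpreted sets. Therefore $\langle n, I_n\rangle \models \phi(\tau(x),\tau(y))$, which means $\phi(\tau(x),\tau(y)) \in \Gamma_n$ by Definition~\ref{char_theory}. Then $\langle m, K\rangle \models \phi(\tau(x),\tau(y))$. Since $I(x) = K(\tau(x))$ and $I(y) = K(\tau(y))$, again the truth condition depends only on the sets, so $\langle m, I\rangle \models \phi(x,y)$. Thus $\langle m, I\rangle \models \Gamma$ and $m \in Spec(\Gamma)$.

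The only delicate step is the complement-compatibility of $\tau$, that is, checking that the choice of raw variables in Definition~\ref{char_theory} makes $I_n^{-1}(n\setminus A) = \overline{I_n^{-1}(A)}$ in all cases. This requires separate checks for $|A|<n/2$, $|A|>n/2$, and $|A| = n/2$; the last case uses that exactly one of $A$ and $n\setminus A$ lies in $\mathcal F_n$. The rest is a routine case check over the four sentence forms.
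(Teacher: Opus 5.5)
Your proposal is correct and takes essentially the same route as the paper: you define the interpretation on $m$ as the composite $K\circ I_n^{-1}\circ J$ (the paper's $I_0\circ I_n^{-1}\circ I_1$), check that it respects complements, and then transfer each sentence of $\Gamma$ into $\Gamma_n$ and back. The size-by-size check you plan for $I_n^{-1}(n\setminus A)=\overline{I_n^{-1}(A)}$ is harmless but unnecessary, since this follows at once from $I_n(\bar a)=n\setminus I_n(a)$ and the injectivity of $I_n$, which is how the paper argues it.
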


\begin{proof}

Let $m \in Spec(\Gamma_n)$ and we need to show $m \in Spec(\Gamma)$. Let $P$ be the set of nouns used in the construction of $\Gamma_n$ in Definition~\ref{char_theory}, and let $P_\Gamma$ be the set of nouns occurring in $\Gamma$, together with their complements.

Since $n \in Spec(\Gamma)$, let $I_1:P_\Gamma \to \mathcal P(n)$ be an interpretation such that $\langle n,I_1\rangle \models \Gamma$. Recall that $I_n:P\to\mathcal P(n)$ is the canonical interpretation from Definition~3.11, and that $I_n$ is a bijection. Since $m \in Spec(\Gamma_n)$, let $I_0:P\to\mathcal P(m)$ be an interpretation such that $\langle m,I_0\rangle \models \Gamma_n$.

Define $I_2:P_\Gamma\to\mathcal P(m)$ by $I_2=I_0\circ I_n^{-1}\circ I_1$. In other words, for every $x\in P_\Gamma$, let $I_2(x)=I_0(I_n^{-1}(I_1(x)))$. We first show that $I_2$ is an interpretation.

Let $p$ be a raw noun in $P_\Gamma$. Since $I_1$ is an interpretation, $I_1(\bar p)=n\setminus I_1(p)$. Let $a=I_n^{-1}(I_1(p))$. Then $I_n(a)=I_1(p)$. Since $I_n$ is an interpretation, $I_n(\bar a)=n\setminus I_n(a)=n\setminus I_1(p)=I_1(\bar p)$. Since $I_n$ is a bijection, it follows that $I_n^{-1}(I_1(\bar p))=\bar a=\overline{I_n^{-1}(I_1(p))}$. Therefore, since $I_0$ is an interpretation, $I_2(\bar p)=I_0(I_n^{-1}(I_1(\bar p)))=I_0(\overline{I_n^{-1}(I_1(p))})=m\setminus I_0(I_n^{-1}(I_1(p)))=m\setminus I_2(p)$. Hence $I_2$ is an interpretation on $m$.

We now show that $\langle m,I_2\rangle\models\Gamma$. Let $\phi\in\Gamma$ be arbitrary. Since $\langle n,I_1\rangle\models\Gamma$, we have $\langle n,I_1\rangle\models\phi$. There are four cases.

\begin{enumerate}

	\item Suppose $\phi=\forall(x,y)$ for some $x,y\in P_\Gamma$. Then $I_1(x)\subseteq I_1(y)$. Since $I_n(I_n^{-1}(I_1(x)))=I_1(x)$ and $I_n(I_n^{-1}(I_1(y)))=I_1(y)$, we have $\langle n,I_n\rangle\models\forall(I_n^{-1}(I_1(x)),I_n^{-1}(I_1(y)))$. Hence, by the definition of $\Gamma_n$, \\$\forall(I_n^{-1}(I_1(x)),I_n^{-1}(I_1(y)))\in\Gamma_n$. Since $\langle m,I_0\rangle\models\Gamma_n$, we have $I_0(I_n^{-1}(I_1(x)))\subseteq I_0(I_n^{-1}(I_1(y)))$. Therefore $I_2(x)\subseteq I_2(y)$, so $\langle m,I_2\rangle\models\phi$.

	\item Suppose $\phi=\exists(x,y)$ for some $x,y\in P_\Gamma$. Then $I_1(x)\cap I_1(y)\neq\emptyset$. Hence $\langle n,I_n\rangle\models\exists(I_n^{-1}(I_1(x)),I_n^{-1}(I_1(y)))$, so $\exists(I_n^{-1}(I_1(x)),I_n^{-1}(I_1(y)))\in\Gamma_n$. Since $\langle m,I_0\rangle\models\Gamma_n$, we have $I_0(I_n^{-1}(I_1(x)))\cap I_0(I_n^{-1}(I_1(y)))\neq\emptyset$. Therefore $I_2(x)\cap I_2(y)\neq\emptyset$, so $\langle m,I_2\rangle\models\phi$.

	\item Suppose $\phi=\exists^{\geq}(x,y)$ for some $x,y\in P_\Gamma$. Then $|I_1(x)|\geq |I_1(y)|$. Hence $\langle n,I_n\rangle\models\exists^{\geq}(I_n^{-1}(I_1(x)),I_n^{-1}(I_1(y)))$, so $\exists^{\geq}(I_n^{-1}(I_1(x)),I_n^{-1}(I_1(y)))\in\Gamma_n$. Since $\langle m,I_0\rangle\models\Gamma_n$, we have $|I_0(I_n^{-1}(I_1(x)))|\geq |I_0(I_n^{-1}(I_1(y)))|$. Therefore $|I_2(x)|\geq |I_2(y)|$, so $\langle m,I_2\rangle\models\phi$.

	\item Suppose $\phi=\exists^{>}(x,y)$ for some $x,y\in P_\Gamma$. Then $|I_1(x)|>|I_1(y)|$. Hence $\langle n,I_n\rangle\models\exists^{>}(I_n^{-1}(I_1(x)),I_n^{-1}(I_1(y)))$, so $\exists^{>}(I_n^{-1}(I_1(x)),I_n^{-1}(I_1(y)))\in\Gamma_n$. Since $\langle m,I_0\rangle\models\Gamma_n$, we have $|I_0(I_n^{-1}(I_1(x)))|>|I_0(I_n^{-1}(I_1(y)))|$. Therefore $|I_2(x)|>|I_2(y)|$, so $\langle m,I_2\rangle\models\phi$.

\end{enumerate}

Therefore, $\langle m,I_2\rangle\models\Gamma$, and hence $m\in Spec(\Gamma)$ by definition.

\end{proof}

\hfill \\ 
\subsection{Indistinguishability of Cardinalities} \hfill \\

We are now one last step from the final result. What we need are three lemmas concerning the indistinguishability of cardinalities. 

The first two show that for any odd number $n$, its best description cannot distinguish it from any greater number, and that for an even number $n$, its best description cannot distinguish it from any greater even number.

\begin{lemma}\label{o_indiscernible}
	For any odd number $n$,$\{i \mid i \geq n\}\subseteq Spec(\Gamma_n)$.  
\end{lemma}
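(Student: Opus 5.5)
The plan is to build, for each $m \geq n$, an explicit interpretation on $m=\{0,\dots,m-1\}$ that satisfies $\Gamma_n$. Since $n \subseteq m$, every raw variable $S \in P_0$ (a subset of $n$ with $|S| < \frac n2$, i.e.\ $|S| \leq \frac{n-1}{2}$ because $n$ is odd) is also a subset of $m$. So I would set $J(S)=S$ and $J(\bar S)=m\setminus S$. This is an interpretation by construction. It remains to show that every $\phi(x,y)$ true in $\langle n, I_n\rangle$ is true in $\langle m, J\rangle$.

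Call nouns in $P_0$ \emph{small} and nouns in $P_1$ \emph{large}. The key numerical facts are these. A small noun has size at most $\frac{n-1}{2}$ in both models. A large noun $\bar S$ has size $n-|S| \geq \frac{n+1}{2}$ in $n$ and size $m-|S| \geq m-\frac{n-1}{2} \geq \frac{m+1}{2}$ in $m$. So in both models every large noun is strictly larger than every small noun; oddness of $n$ is used exactly here, since it rules out a half-size set. I would then check the four sentence forms by splitting on whether each of $x,y$ is small or large.

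\emph{Cardinality sentences.} Small versus small gives identical sizes in both models. Small versus large gives the same strict comparison, large bigger, in both. Large versus large: $|m\setminus S|$ versus $|m\setminus T|$ compares as $|T|$ versus $|S|$, exactly as in $n$. Hence $\exists^{\geq}$ and $\exists^{>}$ sentences are preserved.

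\emph{$\forall$ sentences.} The case $S\subseteq T$ is literally unchanged. The case $S \subseteq \bar T$ is equivalent to $S\cap T=\varnothing$ in both models. The case $\bar S\subseteq \bar T$ is equivalent to $T\subseteq S$ in both. The case $\bar S \subseteq T$ is false in $n$ by size, so it is not in $\Gamma_n$.

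\emph{$\exists$ sentences.} The case $S\cap T\neq\varnothing$ is unchanged. The case $S\cap \bar T\neq\varnothing$ is equivalent to $S\not\subseteq T$ in both. The case $\bar S\cap\bar T$ is the complement of $S\cup T$; this set has size at most $n-1<m$, so the intersection is nonempty in $m$ whenever the sentence is in $\Gamma_n$.

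I expect no real obstacle. The only delicate point is the uniform size separation between small and large nouns, which must be verified carefully for arbitrary $m\geq n$. This separation is what fails for even $n$ and is why the even case needs the different statement.
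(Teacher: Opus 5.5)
Your proposal is correct and follows essentially the same route as the paper. Both use the same interpretation: raw nouns keep their meaning, and complemented nouns absorb all of $m\setminus n$. Both then split cases on whether each noun lies in $P_0$ or $P_1$, using the oddness of $n$ to get the strict size gap between the two classes. Your direct size computations stand in for the paper's observation that elements are only ever added, which is a difference of presentation only.
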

\begin{proof}
	Let $m \geq n$. We prove that $m \in Spec(\Gamma_n)$. Let $P = P_0 \cup P_1$ be the set of nouns. Define $I: P \to \mathcal{P}(m)$ as follows: $I(S) = I_n(S)$ for all $S \in P_0$ and $I(\bar{S}) = m \setminus S$ for all $\bar{S} \in P_1$. This is clearly an interpretation function for $m$. Note that what this interpretation does is to assign all the additional elements in $m \setminus n$ to the interpretation of each complemented noun.
	
	We now show $\langle m, I\rangle \models \Gamma_n$. Let $\phi \in \Gamma_n$. We have $\langle n, I_n\rangle \models \phi$. 
	
	\begin{enumerate} 
 		 \item $\phi = \forall(x, y)$ for some $x, y \in P$. 
		 
		 If $x, y \in P_0$, then $I(x) = I_n(x) \subseteq I(y) = I_n(y)$. 
		 
		 If $x, y \in P_1$ then, let $x = \bar{a}$ and $y = \bar{b}$, where $a, b \in P_0$. We have $I_n(x) \subseteq I_n(y)$, so $I_n(\bar{a}) \subseteq I_n(\bar{b})$. By construction, this means $n \setminus I_n(a) \subseteq n \setminus I_n(b)$, so  $I_n(b) \subseteq I_n(a)$. Then we have, by our construction,  $I(b) \subseteq I(a)$. Therefore, $I(\bar{a}) = m \setminus I(a) \subseteq I(\bar{b}) = m \setminus I(b)$. I.e., $I(x) \subseteq I(y)$.
		 
		 If $x \in P_0$ and $y \in P_1$, then let $y = \bar{z}$ for some $z \in P_0$. We have $I_n(x) \subseteq I_n(y)$, so $I_n(x) \subseteq n \setminus I_n(z)$ and hence $I_n(x) \cap I_n(z) = \varnothing$. Thus $I(x) \cap I(z) = \varnothing$, so $I(x) \subseteq m \setminus I(z)$, i.e. $I(x) \subseteq I(y)$. 
		 
		 If $x \in P_1$ and $y \in P_0$. Then let $x = \bar{z}$ for some $z \in P_0$. We thus have $n \setminus I_n(z) \subseteq I_n(y)$. Then in $n$, we have $n \setminus z \subseteq y$. However, this is impossible because $|n \setminus z| > |y|$ as we took $P_{0} = \{S \subseteq n \mid |S| < \frac{n}{2}\}$. 
		 Thus, in all possible cases, we have $\langle m, I\rangle \models \phi$. 
		 \item $\phi = \exists(x, y)$ for some $x, y \in P$. This is clear because the new interpretation function only adds elements to the interpretation of some variables, so non-empty intersections remain non-empty. 
 		 \item $\phi = \exists^{\geq}(x, y)$ for some $x, y \in P$. 
		 
		  If $x, y \in P_0$. This is clear as there is no change to the interpretation. 
		  
		 If $x, y \in P_1$. The new interpretation function adds the same elements to the interpretation of both variables so there is no change regarding the order of their cardinality. 
		 
		 If $x \in P_0$ and $y \in P_1$ This is impossible as we put $P_{0} = \{S \subseteq n \mid |S| < \frac{n}{2}\}$.
		 
		  If $x \in P_1$ and $y \in P_0$. This still holds since the new elements are added only to the interpretation of $x$, not to that of $y$. 
  		 \item $\phi = \exists^{>}(x, y)$ for some $x, y \in P$.
		 Almost the same as the previous case. 
 	\end{enumerate}
	
	Therefore, $\langle m, I\rangle \models \Gamma_n$ and hence $m \in Spec(\Gamma_n)$. 
\end{proof}

\begin{lemma}\label{e_indiscernible}
	For any even number $n$, we have $\{2i \mid 2i \geq n \} \subseteq Spec(\Gamma_n)$.
\end{lemma}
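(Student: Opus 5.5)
The approach follows Lemma~\ref{o_indiscernible}, with one change: the new elements have to be shared out so that the nouns of size exactly $\frac{n}{2}$ stay balanced. Let $m = n + 2k$ with $k \geq 0$, and split $m \setminus n$ into two disjoint sets $A$ and $B$ with $|A| = |B| = k$. Define $I : P \to \mathcal P(m)$ as follows:
\begin{itemize}
\item for $S \in P_0$ with $|S| < \frac{n}{2}$, set $I(S) = S$ and $I(\bar S) = m \setminus S$;
\item for $S \in \mathcal F_n$, set $I(S) = S \cup A$ and $I(\bar S) = (n \setminus S) \cup B$.
\end{itemize}
Then $I$ is an interpretation, since in each case $I(\bar S) = m \setminus I(S)$. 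Every noun $x$ also satisfies $I(x) \cap n = I_n(x)$, so $I$ only ever adds new elements to the canonical interpretation.

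Next I would sort the nouns into three classes by the size of $I_n(x)$: small ($< \frac{n}{2}$), half ($= \frac{n}{2}$), and large ($> \frac{n}{2}$). The large nouns are exactly the complements of small ones. Under $I$, a small noun keeps its size, a half noun gains exactly $k$ elements, and a large noun gains $2k$ elements, since it contains all of $A \cup B$. So sizes within a class shift by the same amount, and sizes across classes stay strictly separated:
\[
|I(\text{small})| \leq \tfrac{n}{2}-1 < \tfrac{n}{2}+k = |I(\text{half})| < \tfrac{n}{2}+1+2k \leq |I(\text{large})|.
\]
Hence every $\exists^{\geq}$ and $\exists^{>}$ sentence true in $\langle n, I_n\rangle$ stays true in $\langle m, I\rangle$. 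Sentences of the form $\exists(x,y)$ are preserved because interpretations only grow.

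The $\forall(x,y)$ sentences need a case check, and this is the step where the half-size nouns have to be handled with care.
\begin{itemize}
\item \emph{Small $x$ included in anything.} Any inclusion $I_n(x) \subseteq I_n(y)$ survives, because $I(x) = I_n(x)$ and $I(y) \supseteq I_n(y)$.
\item \emph{Impossible cases.} An inclusion of a larger class into a smaller one (half into small, large into small, large into half) cannot hold in $n$, by size.
\item \emph{Large into large.} Here $\bar a \subseteq \bar b$ in $n$ gives $b \subseteq a$ for small $a, b$, hence $m \setminus a \subseteq m \setminus b$.
\item \emph{Half into large.} Say $x$ is a half noun and $y = \bar z$ with $z$ small. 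Then $I_n(x) \cap z = \varnothing$. Since $A$, $B$ and $z$ are pairwise disjoint, $I(x) \cap z = \varnothing$, so $I(x) \subseteq m \setminus z$.
\item \emph{Half into half.} Two subsets of $n$ of equal size with one inside the other are equal. Because $I_n$ is a bijection, $x$ and $y$ are then the same noun, so the inclusion is trivial.
\end{itemize}
The crucial point is that each half noun receives exactly one of $A$ or $B$. The key property is that $|I(x)| = |I(\bar x)|$ is preserved for half-size nouns: adding all new elements to one side, as in the odd case, would break this. It also forces $m - n$ to be even.
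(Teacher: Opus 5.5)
Your proof is correct and uses essentially the paper's construction. The paper proceeds by induction, adding two new elements at each step: one goes to each noun in $\mathcal F_n$, the other to its complement, and both go to complements of small nouns. Unrolling that induction from $I_n$ gives exactly your interpretation, with $A$ and $B$ the alternate new elements. The only difference is that you verify every sentence once against $\langle n, I_n\rangle$ instead of step by step against the inductive hypothesis, which is equally valid.
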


\begin{proof}

We prove by induction that every even $m\geq n$ belongs to $Spec(\Gamma_n)$. The base case $m=n$ is immediate, since $\langle n,I_n\rangle\models\Gamma_n$ by definition.

For the inductive step, suppose $m\in Spec(\Gamma_n)$, where $m\geq n$ is even, and let $I:P\to\mathcal P(m)$ be an interpretation such that $\langle m,I\rangle\models\Gamma_n$. We construct an interpretation $I':P\to\mathcal P(m+2)$ such that $\langle m+2,I'\rangle\models\Gamma_n$.

Recall that $\mathcal F_n$ is the family of subsets of $n$ of cardinality $n/2$ chosen in Definition~\ref{char_theory}, such that for every $A\subseteq n$ with $|A|=n/2$, exactly one of $A$ and $n\setminus A$ belongs to $\mathcal F_n$.

We need to add two new elements, $m$ and $m+1$. For each raw noun $S\in P_0$, let
\[
I'(S)\mathrel{:=}
\begin{cases}
I(S), & \text{if } |S|<\frac{n}{2},\\[2mm]
I(S)\cup\{m\}, & \text{if } S\in\mathcal F_n.
\end{cases}
\]
For every complemented noun $\bar S\in P_1$, let $I'(\bar S)\mathrel{:=}(m+2)\setminus I'(S)$. Then $I'$ is an interpretation on $m+2$ by construction.

Notice what happens to the cardinality of each noun at this step. If $|I_n(x)|<n/2$, then no new element is added to $I(x)$. If $|I_n(x)|=n/2$, then exactly one new element is added to $I(x)$: if $x=S$ for some $S\in\mathcal F_n$, it receives $m$, while its complement $\bar S$ receives $m+1$. Finally, if $|I_n(x)|>n/2$, then $x$ is the complement of a raw noun of cardinality less than $n/2$, and hence both $m$ and $m+1$ are added to $I(x)$.

We now show that $\langle m+2,I'\rangle\models\Gamma_n$. Let $\phi\in\Gamma_n$. We have $\langle n,I_n\rangle\models\phi$ by definition and $\langle m,I\rangle\models\phi$ by inductive hypothesis. There are four cases.

\begin{enumerate}
	\item Suppose $\phi=\forall(x,y)$ for some $x,y\in P$. By the induction hypothesis, $I(x)\subseteq I(y)$, so we only need to check that the newly added elements do not destroy the inclusion.

	If $|I_n(x)|<n/2$, then no new element is added to $I(x)$, so the inclusion is preserved.

	If $|I_n(x)|>n/2$, then $I_n(x)\subseteq I_n(y)$ implies $|I_n(y)|>n/2$. Hence both new elements are added to both $I(x)$ and $I(y)$, so the inclusion is preserved.

	Finally, suppose $|I_n(x)|=n/2$. Since $I_n(x)\subseteq I_n(y)$, either $|I_n(y)|>n/2$, in which case both new elements are added to $I(y)$, or $|I_n(y)|=n/2$. In the latter case, $I_n(x)\subseteq I_n(y)$ and $|I_n(x)|=|I_n(y)|$ imply $I_n(x)=I_n(y)$. Since $I_n$ is a bijection, $x=y$, so the same new element is added to both sides. Thus in every case $I'(x)\subseteq I'(y)$.

	\item Suppose $\phi=\exists(x,y)$ for some $x,y\in P$. By the induction hypothesis, $I(x)\cap I(y)\neq\emptyset$. Since the construction only adds elements to the interpretations of nouns and never removes any, we still have $I'(x)\cap I'(y)\neq\emptyset$.

	\item Suppose $\phi=\exists^{\geq}(x,y)$ for some $x,y\in P$. Since $\phi\in\Gamma_n$, we have $|I_n(x)|\geq|I_n(y)|$. At each inductive step, a noun receives $0$, $1$, or $2$ new elements according as its cardinality under $I_n$ is less than, equal to, or greater than $n/2$. Therefore, $x$ receives at least as many new elements as $y$. By the induction hypothesis, $|I(x)|\geq|I(y)|$, and hence $|I'(x)|\geq|I'(y)|$.

	\item Suppose $\phi=\exists^{>}(x,y)$ for some $x,y\in P$. Again, $|I_n(x)|>|I_n(y)|$, so $x$ receives at least as many new elements as $y$. Since by the induction hypothesis $|I(x)|>|I(y)|$, adding at least as many elements to $I(x)$ as to $I(y)$ preserves the strict inequality. Hence $|I'(x)|>|I'(y)|$.
\end{enumerate}

Therefore $\langle m+2,I'\rangle\models\Gamma_n$, so $m+2\in Spec(\Gamma_n)$. By induction, every even $m\geq n$ belongs to $Spec(\Gamma_n)$.

\end{proof}

The final lemma says, if a description cannot distinguish an even number from a larger odd number, then neither could this description distinguish them from anything in between. In other words, any theory $\Gamma$ describing an even number $n$ can distinguish it from $n+1$ only if $\Gamma$ says ``this number is even!''.

\begin{lemma}\label{eo_indiscernible}
	Let $U$ be a spectrum. If an even number $m \in U$, an odd number $n \in U$, and $n > m$, then we have $i \in U$ for all $m \leq i \leq n$. 
\end{lemma}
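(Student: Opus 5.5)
The plan is to show directly that $m+1\in U$. Once that is done, the rest follows from earlier results. Since $m+1$ is odd, Lemma~\ref{o_indiscernible} gives $\{i \mid i\geq m+1\}\subseteq Spec(\Gamma_{m+1})$. Lemma~\ref{smallest}, applied to $m+1\in U$, then gives $Spec(\Gamma_{m+1})\subseteq U$. So every $i$ with $m\leq i\leq n$ lies in $U$, and in fact every $i\geq m$ does.

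Let $U=Spec(\Gamma)$ and let $P_\Gamma$ be the nouns of $\Gamma$ together with their complements. Fix $\langle m,I_1\rangle\models\Gamma$ and $\langle n,I_2\rangle\models\Gamma$. We build $I$ on $m+1=m\cup\{m\}$, adding the single new element $m$ in the spirit of Lemmas~\ref{o_indiscernible} and \ref{e_indiscernible}. For $x\in P_\Gamma$ the rule is:
\begin{itemize}
\item if $|I_1(x)|<m/2$, then $x$ receives nothing, i.e.\ $I(x)=I_1(x)$;
\item if $|I_1(x)|>m/2$, then $x$ receives $m$;
\item if $|I_1(x)|=m/2$, then $x$ receives $m$ exactly when $|I_2(x)|>n/2$.
\end{itemize}
The odd model is used only in the third case. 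Because $n$ is odd, $|I_2(x)|+|I_2(\bar x)|=n$ forces exactly one of $x,\bar x$ to exceed $n/2$. Also, $x$ has size exactly $m/2$ iff $\bar x$ does, and the small and large cases swap under complement. Hence $I(\bar x)=(m+1)\setminus I(x)$, so $I$ is an interpretation.

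Next we check each $\phi\in\Gamma$. In every case $\langle m,I_1\rangle\models\phi$, and in the half–half cases also $\langle n,I_2\rangle\models\phi$.
\begin{itemize}
\item $\exists(x,y)$ is preserved, since we only add elements.
\item For $\forall(x,y)$, the inclusion $I_1(x)\subseteq I_1(y)$ rules out $x$ large with $y$ not large, and $x$ half with $y$ small. The only delicate case is $x,y$ both half-sized with $I_1(x)=I_1(y)$. There, $I_2(x)\subseteq I_2(y)$ gives $|I_2(y)|\geq|I_2(x)|$, so if $x$ receives $m$ then so does $y$.
\item For $\exists^{\geq}(x,y)$, the number of new elements received is monotone in the size class under $I_1$. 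The only delicate case is again half versus half. There $|I_2(x)|\geq|I_2(y)|>n/2$ whenever $y$ receives $m$, so $x$ receives it too.
\item For $\exists^{>}(x,y)$, the two nouns cannot both be half-sized. So $x$ lies in a strictly higher size class than $y$, or in the same non-half class, and the strict inequality survives.
\end{itemize}

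The main obstacle is the half-sized nouns. Any choice for them must respect simultaneously the complement constraint, the inclusions and the $\geq$-comparisons among them, and an arbitrary choice can break one of these. The key idea is that the odd model $\langle n,I_2\rangle$ supplies a consistent choice: the cut at $|I_2(x)|>n/2$ is automatically upward closed under both $\subseteq$ and $\geq$ as they hold in $\langle n,I_2\rangle$. The remaining cases are a routine case check.
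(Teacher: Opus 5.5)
Your proof is correct, and its outer structure matches the paper's. Both reduce to showing $m+1\in U$, then extend the even model $\langle m,I_1\rangle$ by one point, placed in exactly one of $I(x),I(\bar x)$ for each noun.

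The difference lies in where the new point goes. The paper ignores the size classes of the even model entirely. In your notation, it puts the new point into $I(x)$ precisely when $|I_2(x)|>n/2$, for every noun $x$. The verification then runs as follows:
\begin{itemize}
\item Each sentence of $\Gamma$ holds in $\langle n,I_2\rangle$. So no $\forall(x,y)$, $\exists^{\geq}(x,y)$ or $\exists^{>}(x,y)$ in $\Gamma$ can have $x$ below $n/2$ and $y$ above it there.
\item Hence $x$ always receives at least as many new points as $y$.
\item The sentence also holds in $\langle m,I_1\rangle$, so it survives the extension.
\end{itemize}
This is a uniform two-class check with no half-sized case at all. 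The obstacle you single out arises only because you start from the even model's own partition. The fact you use to resolve it, that the odd model's cut is upward closed under the relations asserted in $\Gamma$, already handles every noun.

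Your lexicographic rule (even-model class first, odd-model cut as tie-breaker) costs a longer case analysis, but it isolates a slightly sharper point. The odd model is needed only to decide the nouns of size exactly $m/2$. Elsewhere the extension follows the even model's natural split, as in Lemma~\ref{e_indiscernible}.

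Your reduction is also a little leaner than the paper's. The paper handles even $i$ separately via Lemmas~\ref{smallest} and~\ref{e_indiscernible}. As you observe, $m+1\in U$ together with Lemmas~\ref{smallest} and~\ref{o_indiscernible} already gives every $i\geq m$.
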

\begin{proof}
	For an even $i\geq m$, by Lemma~\ref{smallest} we have that $Spec(\Gamma_m)\subseteq U$ and by Lemma~\ref{e_indiscernible} we have $i\in Spec(\Gamma_m).$ Hence $i\in U$.
	
	Thus let $i$ be odd. By Lemma~\ref{smallest} and~\ref{o_indiscernible}, it suffices to prove $m +1 \in U$. 
	
	Let $U = Spec(\Gamma)$. Let $I_0$ be such that $\langle m, I_0\rangle \models \Gamma$ and $I_1$ be such that $\langle n, I_1\rangle \models \Gamma$. Let $P$ be the set of nouns occurring in $\Gamma$, together with their complements. Let $P = Q_0 \cup Q_1$ where $Q_0 = \{x \in P \mid |I_1(x)| < \frac{1}{2}n\}$ and $Q_1 = P \setminus Q_0 = \{x\in P:|I_1(x)|> \frac{1}{2}n\}$. 
	
	Since $n$ is odd, no noun has interpretation of cardinality exactly $\frac{1}{2}n$. Moreover, $|I_1(x)|+|I_1(\bar x)|=n.$ Hence exactly one of $x$ and $\bar x$ belongs to $Q_0$, and the other belongs to $Q_1$. Therefore, renaming variables if necessary, let $Q_0 = \{x_0, \dots, x_l, \dots\}$ and $Q_1 = \{\bar{x_0}, \dots, \bar{x_l}, \dots\}$. 
	
	We are going to add one additional element $m$ to the set $m = \{0, \dots, m-1\}$ and modify the interpretation $I_0$ so that for each raw variable $x$, we either give $m$ to the interpretation of $x$ or to that of $\bar{x}$. Things may go wrong if adding this number changes the cardinality order, or make disjoint sets intersect, so some caution is necessary. 
	
	Define $I: P \to \mathcal{P}(m+1)$ as follows: 
	\[
	I(z) = 
	\begin{cases}
    	 I_0(z),& \mbox{if } z \in Q_0 \\
    	I_0(z) \cup \{m\},              & \mbox{if } z \in Q_1 \\
	\end{cases}
	\]
	It is routine to check that this is an interpretation function, and here we need the fact that $Q_0$ and $Q_1$ are of these forms: $Q_0 = \{x_0, \dots, x_l, \dots\}$ and $Q_1 = \{\bar{x_0}, \dots, \bar{x_l}, \dots\}$. 
	
	We show $\langle m+1, I\rangle \models \Gamma$. Let $\phi \in \Gamma$.	
	\begin{enumerate} 
 		 \item $\phi = \forall(x, y)$ for some $x, y \in P$. 
		 If $x, y \in Q_0$ Then nothing is changed. 	
		 	 
		 If $x, y \in Q_1$ this case follows since the new interpretation function adds the same elements to the interpretation of both variables.		 
		 
		If $x \in Q_0$ and $y \in Q_1$. This is clear since the new function adds the only new elements to the interpretation of all variables in $Q_1$.

		If $x \in Q_1$ and $y \in Q_0$. This is impossible since $\langle n, I_1\rangle \models \forall(x, y)$ we have $I_1(x) \subseteq I_1(y)$ and hence $|I_1(x)| \leq |I_1(y)|$, contradicting $|I_1(x)| > \frac{n}{2} > |I_1(y)|$.
		 
		 Thus, in all possible cases, we have $\langle m+1, I\rangle \models \phi$. 
		 \item $\phi = \exists(x, y)$ for some $x, y \in P$. This is clear because the new interpretation function only adds elements to the interpretation of some variables, so non-empty intersections remain non-empty. 
 		 \item $\phi = \exists^{\geq}(x, y)$ for some $x, y \in P$. 
		 
		  If $x, y \in Q_0$. Then nothing is changed. 
		  
		 If $x, y \in Q_1$. One new element is added to both interpretations, so the cardinality comparison is preserved. 
		 
		 If $x \in Q_0$ and $y \in Q_1$ This is impossible because $|I_1(x)|<n/2<|I_1(y)|$, but $\langle n,I_1\rangle\models\exists^{\geq}(x,y)$ would require $|I_1(x)|\geq|I_1(y)|$.

		 If $x \in Q_1$ and $y \in Q_0$ This still holds because the new interpretation function adds the new element only to the interpretation of $x$. 
  		 \item $\phi = \exists^{>}(x, y)$ for some $x, y \in P$. The same case analysis applies. The case $x\in Q_0$ and $y\in Q_1$ is impossible in the $n$-element model; adding one new element to both sides preserves a strict inequality; and adding it only to the interpretation of $x$ also preserves strict inequality. Hence $|I(x)|>|I(y)|$. 
 	\end{enumerate}

	Therefore $\langle m+1, I\rangle \models \Gamma$, so $m + 1 \in Spec(\Gamma) = U$, which is what remained to be shown.
\end{proof}

Combining the above gives our main result for this paper. Though let us first recall the notation we introduced in the first section. 

For $k\in\mathbb{N}^+$, let $T_k=\{n\in\mathbb{N}^+:n\geq k\}$ and $E_k=\{n\in\mathbb{N}^+:n\geq k
\text{ and }n\text{ is even}\}.$ Note that for odd $k$, we have $E_k = E_{k+1}$. Then we have:

\begin{theorem}[Spectrum Classification]\label{main}
The $S^\dagger(card)$-spectra are precisely $\varnothing, T_k, E_k$, where $k\in\mathbb{N}^+$.
\end{theorem}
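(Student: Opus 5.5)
The proof has two directions: every listed set is a spectrum, and every spectrum is one of the listed sets.

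\emph{Realizability.} The empty set is $Spec(\{\exists^{>}(x,x)\})$, and $T_k$ is realized by the chain theory from the Example: $k$ fresh variables with $\exists^{>}(x_{i},x_{i-1})$ for $i=1,\dots,k$. Since $1\le|I(x_1)|<\dots<|I(x_k)|\le n$, that theory forces $n\ge k$, and conversely it is satisfiable on every $n\ge k$ (take $I(x_i)=\{0,\dots,i-1\}$). For $E_k$, the parity theory $\{\exists^{\geq}(x,\bar{x}),\exists^{\geq}(\bar{x},x)\}$ has spectrum the even numbers. Its intersection with $T_k$ is $E_k$, and this intersection is a spectrum by Lemma~\ref{intersection}.

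\emph{Classification.} Let $U=Spec(\Gamma)$ be nonempty and let $k=\min U$. By Lemma~\ref{smallest}, $Spec(\Gamma_k)\subseteq U$. I split into cases on $k$ and on whether $U$ contains an odd number.
\begin{enumerate}
\item[(i)] If $k$ is odd, Lemma~\ref{o_indiscernible} gives $T_k\subseteq Spec(\Gamma_k)\subseteq U$. Minimality gives $U\subseteq T_k$, so $U=T_k$.
\item[(ii)] If $k$ is even, Lemma~\ref{e_indiscernible} gives $E_k\subseteq U$.
\begin{enumerate}
\item[(a)] If $U$ contains no odd number, then $U\subseteq E_k$ by minimality, so $U=E_k$.
\item[(b)] Otherwise pick an odd $n\in U$; minimality forces $n>k$. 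Lemma~\ref{eo_indiscernible}, applied to the even $k$ and the odd $n$, gives $k+1\in U$. Since $k+1$ is odd, Lemmas~\ref{smallest} and~\ref{o_indiscernible} give $T_{k+1}\subseteq U$. Together with $k\in U$ this yields $T_k\subseteq U$, and with minimality $U=T_k$.
\end{enumerate}
\end{enumerate}

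The only delicate step is (b): the lemma must be applied with the minimal element $k$ as the even endpoint, so that it yields $k+1$ rather than some larger odd number. The content lies in the earlier lemmas, so I expect no real obstacle; the one thing to check is that all cases are covered and that every conclusion is written as some $T_k$ or $E_k$ with $k\ge1$.
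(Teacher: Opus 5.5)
Your proof is correct and follows the paper's argument: the same realizing theories (your appeal to Lemma~\ref{intersection} for $E_k$ amounts to the paper's explicit union of the chain theory with a fresh parity variable), and the same three-way case split on the parity of $\min U$ using Lemmas~\ref{smallest}, \ref{o_indiscernible}, \ref{e_indiscernible} and~\ref{eo_indiscernible}. The only difference is inessential: in case (b) you use Lemma~\ref{eo_indiscernible} just to get $k+1\in U$ and then apply Lemma~\ref{o_indiscernible} at $k+1$, whereas the paper applies Lemma~\ref{o_indiscernible} at $n$ and uses Lemma~\ref{eo_indiscernible} to fill in $\{i : k\le i\le n\}$ (one small slip: the chain theory uses $k+1$ variables $x_0,\dots,x_k$, not $k$).
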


\begin{proof}

	We first check that each of the listed sets is a spectrum.
	\begin{enumerate} 
		\item If $U=\varnothing$, then $U=Spec(\{\exists^{>}(x,x)\})$, so the result holds.
		
		\item Suppose $U=T_k$. Then $U$ is the spectrum of $\{
	\exists^{>}(x_1,x_0),
	\ldots,
	\exists^{>}(x_k,x_{k-1})
	\}.$

	\item Suppose $U=E_k$. Then $U$ is the spectrum of $\{
	\exists^{>}(x_1,x_0),
	\ldots,
	\exists^{>}(x_k,x_{k-1}),\\
	\exists^{\geq}(x,\bar x),
	\exists^{\geq}(\bar x,x)
	\}$, where $x$ is a fresh raw noun distinct from $x_0,\dots,x_k$. 
	\end{enumerate}

	It remains to show that every spectrum is $\varnothing$, $T_k$ or $E_k$ for some
	$k \in \mathbb{N}^+$. Let $U \subseteq \mathbb{N}^+$ be any non-empty spectrum. By Well-Ordering Principle, let $m$ be the least element in $U$. 
	
	If $m$ is odd, then by Lemma~\ref{smallest}, we have $Spec(\Gamma_m) \subseteq U$. Now by Lemma~\ref{o_indiscernible}, $\{i \mid i \geq m\}\subseteq Spec(\Gamma_m)$, so $\{i \mid i \geq m\}\subseteq U$. However, since $m$ is the least element of $U$, it follows that $U = \{i \mid i \geq m\}$. 
	
	If $m$ is even and $U$ contains even numbers only. By Lemma~\ref{smallest}, we have $Spec(\Gamma_m) \subseteq U$ and by Lemma~\ref{e_indiscernible} $\{2i \mid 2i \geq m \} \subseteq Spec(\Gamma_m)$. Thus $\{2i \mid 2i \geq m \} \subseteq U$, and we have $U = \{2i \mid 2i \geq m \}$ since $m$ is the least element and $U$ only contains even numbers. 
	
	If $m$ is even and $U$ also contains an odd number $n >m$. Then as in the first case, we have by  Lemma~\ref{smallest} and~\ref{o_indiscernible} that $\{i \mid i \geq n\}\subseteq U$. Moreover, by Lemma~\ref{eo_indiscernible} $\{i \mid m \leq i \leq n\} \subseteq U$. Therefore $\{i \mid i \geq m\} \subseteq U$, and, again, since $m$ is the least element, $U = \{i \mid i \geq m\}$. 
	
	Hence, all non-empty spectra are either of the shape $\{i \mid i \geq m\}$ or of the shape $\{2i \mid 2i \geq m \}$ for some $m \in \mathbb{N}^+$.
\end{proof}

We now have the full picture of the spectra in this language. Moreover, the three
families of theories exhibited in the first half of the proof of Theorem~\ref{main} are
all finite, so every spectrum is the spectrum of a finite theory.

\begin{corollary}\label{finite}
	Let $U \subseteq \mathbb{N}^+$ be any spectrum. Then there is a finite theory
	$\Gamma_0$ such that $Spec(\Gamma_0) = U$.
\end{corollary}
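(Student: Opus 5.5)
The plan is to read the corollary off Theorem~\ref{main}. Since that theorem lists every spectrum, it suffices to exhibit, for each listed set, a finite theory with that spectrum. We do not need to compress an arbitrary (possibly infinite) $\Gamma$ directly. Let $U$ be a spectrum. By Theorem~\ref{main}, $U$ is $\varnothing$, $T_k$, or $E_k$ for some $k\in\mathbb{N}^+$, and we treat the three cases separately.

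If $U=\varnothing$, take $\Gamma_0=\{\exists^{>}(x,x)\}$. No model satisfies $|I(x)|>|I(x)|$, so $Spec(\Gamma_0)=\varnothing$. If $U=T_k$, take the chain theory $\Gamma_0=\{\exists^{>}(x_1,x_0),\dots,\exists^{>}(x_k,x_{k-1})\}$ on raw variables $x_0,\dots,x_k$. Any model gives $0\le|I(x_0)|<|I(x_1)|<\dots<|I(x_k)|\le n$, hence $n\ge k$. Conversely, for $n\ge k$ the interpretation $I(x_i)=\{0,\dots,i-1\}$ satisfies the chain. If $U=E_k$, take the chain above together with $\exists^{\geq}(x,\bar x)$ and $\exists^{\geq}(\bar x,x)$, where $x$ is a fresh raw variable. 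By the opening observation of this section, the last two sentences force $n$ to be even and are satisfiable on every even $n$. Because $x$ does not occur in the chain, Lemma~\ref{intersection} (or its proof, which shows that the spectrum of a union of theories on disjoint variables is the intersection) gives $Spec(\Gamma_0)=T_k\cap\{\text{even}\}=E_k$. Each of these theories is visibly finite, and this finishes the proof.

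There is essentially no hard step; the only care needed is in the $T_k$ chain. The count has to be checked: the chain has $k+1$ nouns with strictly increasing cardinalities in $[0,n]$, so it forces $n\ge k$ and not $n\ge k+1$. If the indexing came out off by one, we would shorten the chain accordingly. The corollary then follows immediately.
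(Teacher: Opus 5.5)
Your proposal is correct and follows the paper's route exactly: the paper also derives this corollary from Theorem~\ref{main}, observing that the three witnessing theories for $\varnothing$, $T_k$ and $E_k$ exhibited in the first half of that proof are finite. Your check of the chain count and your appeal to the proof of Lemma~\ref{intersection} for the $E_k$ case just make explicit details the paper leaves implicit.
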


\begin{proof}
	This was shown in the proof of Theorem~\ref{main}.
\end{proof}


\newpage
\section{Adding Boolean Connectives}
We now move on to consider the situation where we add the Boolean connectives to the language. To be specific, we treat sentences in the language $S^{\dagger}(card)$ as atomic ones, and build the new language by recursively adding the connectives $\land$, $\lor$, and $\neg$. The semantics for the new language is the obvious one. 

Every sentence of the extended language is a finite Boolean combination of
$S^{\dagger}(card)$-sentences, so by propositional logic it is equivalent to one in conjunctive
normal form, in which $\neg$ occurs only in front of $S^{\dagger}(card)$-sentences. By
Lemma~\ref{sem_neg}, each such occurrence $\neg \phi$ may be replaced by the semantic
negation $\bar{\phi}$, which is again an $S^{\dagger}(card)$-sentence. Finally, by the lemma below,
a conjunct $\phi \wedge \psi$ of a theory may be replaced by the two sentences
$\phi, \psi$. Thus, in the presence of disjunction, neither negation nor conjunction
affects which sets arise as spectra, and we may assume that every sentence of a theory is
a finite disjunction of $S^{\dagger}(card)$-sentences.

\begin{lemma}
For every theory $\Gamma$, $Spec(\Gamma\cup\{\phi\wedge\psi\})
= Spec(\Gamma\cup\{\phi,\psi\}).$ 
\end{lemma}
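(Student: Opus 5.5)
The plan is to show that, for each fixed $n$, the two theories have exactly the same models, not merely the same spectra. Equality of spectra then follows at once from Definition~\ref{spec_def}. Here $\phi$ and $\psi$ are arbitrary sentences of the extended language, so the only semantic fact needed is the satisfaction clause for $\wedge$: $\mathfrak{M}\models\phi\wedge\psi$ if and only if $\mathfrak{M}\models\phi$ and $\mathfrak{M}\models\psi$. Satisfaction of a theory is the extended definition, namely satisfaction of every member.

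The argument has two inclusions. For $\subseteq$, take $n\in Spec(\Gamma\cup\{\phi\wedge\psi\})$ and, using the corollary to Lemma~\ref{card_essence}, fix an interpretation $I$ on $n=\{0,\dots,n-1\}$ with $\langle n,I\rangle\models\Gamma\cup\{\phi\wedge\psi\}$. Then $\langle n,I\rangle$ satisfies every sentence of $\Gamma$. Since it satisfies $\phi\wedge\psi$, the clause for $\wedge$ gives $\langle n,I\rangle\models\phi$ and $\langle n,I\rangle\models\psi$. Hence $\langle n,I\rangle\models\Gamma\cup\{\phi,\psi\}$, so $n\in Spec(\Gamma\cup\{\phi,\psi\})$. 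For $\supseteq$, the same interpretation works in reverse: a model of $\Gamma\cup\{\phi,\psi\}$ satisfies both $\phi$ and $\psi$, hence satisfies $\phi\wedge\psi$, and it still satisfies $\Gamma$.

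There is no genuine obstacle. The one point to be careful about is that the interpretation is not changed between the two theories, and that its domain contains every noun occurring in $\phi$, $\psi$ and $\Gamma$. This holds because the nouns of $\phi\wedge\psi$ are exactly those of $\phi$ together with those of $\psi$, so the two theories use the same set of nouns. Given this, the proof is a direct unfolding of the definitions.
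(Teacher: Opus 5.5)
Your proposal is correct and matches the paper's proof, which is the one line ``Just use the same interpretation function''. Your two inclusions just unfold that remark via the satisfaction clause for $\wedge$.
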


\begin{proof}
	Just use the same interpretation function.
\end{proof}

Therefore, to characterise the spectra $Spec(\Gamma)$ of $\mathbb{N}^+$ in this new language, we may assume that all sentences $\phi \in \Gamma$ are in the form of $\phi = \bigvee_{i < n} \phi_i$ where each $\phi_i$ is a sentence in $S^{\dagger}(card)$. Then if $\mathfrak{M} \models \Gamma$, we must have that for all $\phi \in \Gamma$, $\mathfrak{M} \models \phi$, and hence there must be some $i < n$ such that $\mathfrak{M} \models \phi_i$, and vice versa.

\begin{definition}
Let $\Gamma=\{\phi_i\mid i\in I\}$ be a theory, where $I$ is some index set. Suppose that for every $i\in I$,
$\phi_i=\bigvee_{j<n_i}\phi_{ij}$ for some $n_i\in\mathbb N^+$, where each $\phi_{ij}$ is a sentence in $S^\dagger(card)$. We say $\Delta$ is an \textit{atomic component} of $\Gamma$ if $\Delta=\{\psi_i\mid i\in I\}$, where for every $i\in I$, $\psi_i=\phi_{ij}$ for some $j<n_i$. Let $At(\Gamma)$ be the set of all atomic components of $\Gamma$.
\end{definition}

Using the same notation as in the above definition, we have:

\begin{lemma}
For any model $\mathfrak M$, we have $\mathfrak M\models\Gamma$ iff there is some $\Delta\in At(\Gamma)$ such that $\mathfrak M\models\Delta$.
\end{lemma}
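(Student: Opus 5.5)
The plan is to prove the two directions separately, working directly from the definitions of satisfaction for disjunctions and of atomic components. Throughout, write $\Gamma=\{\phi_i\mid i\in I\}$ with $\phi_i=\bigvee_{j<n_i}\phi_{ij}$, as in the definition of $At(\Gamma)$.

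\emph{Right to left.} Suppose $\Delta=\{\psi_i\mid i\in I\}\in At(\Gamma)$ and $\mathfrak M\models\Delta$. Fix $i\in I$. By definition of atomic component, $\psi_i=\phi_{ij}$ for some $j<n_i$. Since $\mathfrak M\models\psi_i$, the model satisfies one of the disjuncts of $\phi_i$, so $\mathfrak M\models\phi_i$ by the semantics of $\vee$. As $i$ was arbitrary, $\mathfrak M\models\Gamma$.

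\emph{Left to right.} Suppose $\mathfrak M\models\Gamma$. For each $i\in I$ we have $\mathfrak M\models\phi_i$, so the set $J_i=\{j<n_i\mid \mathfrak M\models\phi_{ij}\}$ is nonempty. We must pick one witness $j_i\in J_i$ for each $i$. Each $J_i$ is a nonempty finite subset of $\mathbb N$, so we take $j_i=\min J_i$; this avoids any appeal to the axiom of choice even when $I$ is infinite. Set $\Delta=\{\phi_{ij_i}\mid i\in I\}$. Then $\Delta$ is an atomic component of $\Gamma$ by construction, and $\mathfrak M\models\phi_{ij_i}$ for every $i$, so $\mathfrak M\models\Delta$.

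The only point needing care is the uniform choice of witnesses in the left-to-right direction. Taking least indices handles it, so I do not expect a genuine obstacle. The argument also needs one bookkeeping convention. A theory is a set, so distinct indices $i$ may yield the same sentence $\psi_i$ in $\Delta$, and $\Gamma$ is indexed by $I$ with a fixed presentation of each $\phi_i$ as a disjunction. With that convention the correspondence above is exact.
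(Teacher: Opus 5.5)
Your proposal is correct and follows the paper's proof essentially verbatim: the right-to-left direction reads off a satisfied disjunct $\psi_i=\phi_{ij}$ of each $\phi_i$, and the left-to-right direction takes the least index $j<n_i$ with $\mathfrak M\models\phi_{ij}$, exactly as the paper does. Your remark about the fixed indexed presentation of $\Gamma$ is a harmless clarification that the paper leaves implicit.
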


\begin{proof}
$(\Rightarrow)$ Suppose $\mathfrak M\models\Gamma$. Then for every $i\in I$, we have $\mathfrak M\models\phi_i$. Since $\phi_i=\bigvee_{j<n_i}\phi_{ij}$, the set $\{j<n_i\mid\mathfrak M\models\phi_{ij}\}$ is nonempty. Let $k$ be its least element, and let $\psi_i=\phi_{ik}$. Then $\Delta=\{\psi_i\mid i\in I\}$ is an atomic component of $\Gamma$ and $\mathfrak M\models\Delta$.

$(\Leftarrow)$ Suppose $\mathfrak M\models\Delta$ for some $\Delta\in At(\Gamma)$. By definition, $\Delta=\{\psi_i\mid i\in I\}$, where for every $i\in I$, $\psi_i=\phi_{ij}$ for some $j<n_i$. Since $\mathfrak M\models\Delta$, we have $\mathfrak M\models\psi_i$ for every $i\in I$. Hence, for every $i\in I$, at least one disjunct of $\phi_i$ is true in $\mathfrak M$, so $\mathfrak M\models\phi_i$. Therefore $\mathfrak M\models\Gamma$.
\end{proof}

\begin{definition}\label{basis}
Let
\[
\mathcal B
=
\{E_2\}\cup\{T_n:n\in\mathbb{N}^+\}.
\]
We call $\mathcal B$ the \emph{spectral subbasis}.
\end{definition}

Let $\mathcal{T}$ be the set of arbitrary unions of finite intersections of the spectral subbasis. That is, the spectral subbasis is a sub-basis of the topology $\mathcal{T}$ on $\mathbb{N}^+$ --- or, the spectra of $S^{\dagger}(card)$ form a basis for this topology (see, e.g., \cite{topology} for the definition of basis and sub-basis for a topology).

Let $\mathcal{U}$ be the set of all spectra of $\mathbb{N}^+$ in the new language. We will show that $\mathcal{U} = \mathcal{T}$.

\begin{theorem}\label{topology_thm}
	$\mathcal{U} = \mathcal{T}$. 
	
	Explicitly,
	\[
	\mathcal{U} = \mathcal{T} = \{\varnothing\}
	\cup \{T_a : a \in \mathbb{N}^+\}
	\cup \{E_b : b \in \mathbb{N}^+\}
	\cup \{T_a \cup E_b : a, b \in \mathbb{N}^+,\ b < a\},
	\]
\end{theorem}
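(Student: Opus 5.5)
The plan is to prove three things: that $\mathcal T$ equals the explicit family $\mathcal L$ on the right-hand side, that $\mathcal U\subseteq\mathcal T$, and that $\mathcal T\subseteq\mathcal U$.

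\emph{Computing $\mathcal T$.} Finite intersections of members of $\mathcal B$ are $\mathbb N^+=T_1$, $T_{\max}$, $E_2$, and $E_2\cap T_k=E_k$. So the basis consists of all $T_a$ and $E_b$. An arbitrary union of such sets can be computed from two minima. Let $a$ be the least index of a $T$-set occurring in the union, and $b$ the least index of an $E$-set. The union then equals $T_a$, $E_b$, $T_a\cup E_b$, or $\varnothing$. If $b\ge a$, then $E_b\subseteq T_a$ and the union collapses to $T_a$. Hence $\mathcal T=\mathcal L$.

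\emph{The inclusion $\mathcal U\subseteq\mathcal T$.} Let $\Gamma$ be a theory of the extended language. By the normal form reduction above, every sentence of $\Gamma$ may be taken to be a finite disjunction of $S^\dagger(card)$-sentences. By the atomic-component lemma,
\[
Spec(\Gamma)=\bigcup_{\Delta\in At(\Gamma)}Spec(\Delta).
\]
Each $\Delta$ is an ordinary $S^\dagger(card)$-theory. By Theorem~\ref{main}, $Spec(\Delta)$ is therefore $\varnothing$, some $T_k$, or some $E_k$, and all of these lie in $\mathcal T$. Since $\mathcal T$ is closed under arbitrary unions, $Spec(\Gamma)\in\mathcal T$. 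This step is painless.

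\emph{The inclusion $\mathcal T\subseteq\mathcal U$.} The sets $\varnothing$, $T_a$ and $E_b$ are already spectra of $S^\dagger(card)$-theories, by the first half of the proof of Theorem~\ref{main}. The only new case is $T_a\cup E_b$ with $b<a$. Let $\Gamma_T$ and $\Gamma_E$ be the finite theories from Corollary~\ref{finite} with $Spec(\Gamma_T)=T_a$ and $Spec(\Gamma_E)=E_b$. Rename variables so that the two theories use disjoint raw variables. Then take
\[
\Gamma=\{\phi\vee\psi : \phi\in\Gamma_T,\ \psi\in\Gamma_E\}.
\]
We must show $Spec(\Gamma)=T_a\cup E_b$. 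The inclusion $\supseteq$ is immediate: a model of $\Gamma_T$, or of $\Gamma_E$, satisfies every disjunction in $\Gamma$.

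For $\subseteq$, suppose $\mathfrak M\models\Gamma$ but $\mathfrak M\not\models\Gamma_T$. Then some $\phi_0\in\Gamma_T$ fails in $\mathfrak M$. Since $\phi_0\vee\psi\in\Gamma$ for every $\psi\in\Gamma_E$, each such $\psi$ must hold in $\mathfrak M$, so $\mathfrak M\models\Gamma_E$. This distributivity argument works precisely because $\Gamma_T$ and $\Gamma_E$ are finite. (Alternatively, the atomic-component lemma gives the same conclusion.) Hence $|M|\in T_a\cup E_b$.

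The main obstacle is just getting the bookkeeping right in this last direction. In particular, the union must be realized by a single theory; a union of theories would realize the intersection of the spectra instead. The finite-theory corollary makes this clean.
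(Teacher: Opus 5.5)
Your proof is correct and follows the paper's route. You get $\mathcal U\subseteq\mathcal T$ from the atomic-component decomposition plus Theorem~\ref{main}, and $\mathcal T\subseteq\mathcal U$ by reducing arbitrary unions to $\varnothing$, $T_a$, $E_b$, $T_a\cup E_b$ and realizing the last by a disjunction. Your theory $\{\phi\vee\psi:\phi\in\Gamma_T,\ \psi\in\Gamma_E\}$ is just the distributed form of the paper's single sentence $\bigwedge\Gamma_T\vee\bigwedge\Gamma_E$.

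One correction to your commentary: the pairwise-disjunction argument never uses finiteness of $\Gamma_T$ and $\Gamma_E$ and works verbatim for infinite theories. So Corollary~\ref{finite} is needed only in the paper's one-sentence version, not in yours.
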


\begin{proof}
	\begin{enumerate}
	\item[$\mathcal{U} \subseteq \mathcal{T}$]: 
	Let $Spec(\Gamma) \in \mathcal{U}$. Then it follows from the previous lemma that $Spec(\Gamma) = \bigcup_{\Delta \in At(\Gamma)} Spec(\Delta)$, and the result follows as $Spec(\Delta)$ is a spectrum in the original language $S^{\dagger}(card)$. 

	\item[$\mathcal{T} \subseteq \mathcal{U}$]:
	We first deal with finite unions. By Lemma~\ref{intersection}, every finite intersection of elements of $\mathcal B$ is an $S^\dagger(card)$-spectrum. Hence every element of $\mathcal T$ is an arbitrary union of $S^\dagger(card)$-spectra. Let $U \in \mathcal{T}$ be such that $U = \bigcup_{i \in I} Spec(\Gamma_{i})$ for some finite index set $I$, and we may assume $\Gamma_{i}$ is an $S^{\dagger}(card)$-theory. By Corollary~\ref{finite} and taking conjunctions, we may further assume that each $\Gamma_{i}$ contains a single sentence $\phi_i$ in the new language. Then clearly $U$ is the spectrum of $\{\bigvee_{i \in I} \phi_i\}$. 

	For arbitrary unions, recall from Theorem~\ref{main} that every nonempty $S^\dagger(card)$-spectrum is either a tail $T_k$ or an even tail $E_k$. Let $U=\bigcup_{i\in I}U_i$ be an arbitrary union of $S^\dagger(card)$-spectra.

	If no nonempty spectrum occurs in the union, then $U=\varnothing$.

	If some tails $T_k$ occur, let $a$ be the least such that $T_a$ occurs in the union. Then $\bigcup\{T_k:T_k\text{ occurs among the }U_i\}=T_a.$ 

	If some even tails occur, let $b$ be the least threshold among them. Then $\bigcup\{E_k:E_k\text{ occurs among the }U_i\}=E_b.$ 

	Hence, every arbitrary union has one of the forms $\varnothing, T_a, E_b, T_a\cup E_b.$ In particular, every arbitrary union of original spectra is a finite union. By the finite-union argument above, each such set is a spectrum in the Boolean extension.

	\end{enumerate}
\end{proof}

Therefore, we conclude that the collection of spectra in the new language is exactly the topology generated by that in the original language. 

\begin{remark}
The topology $\mathcal{T}$ is Alexandrov: it is closed under arbitrary intersections as
well as arbitrary unions, and the least open set containing $n$ is $T_n$ if $n$ is odd
and $E_n$ if $n$ is even. Also,
$\mathcal{T}$ is $T_0$ but not $T_1$.
\end{remark}

\newpage
\section{Reflections and Further Implications}

\subsection{Why Parity but Not Divisibility by Three?} \hfill \\ 
By Theorem~\ref{main}, the only restrictions that $S^\dagger(card)$ can impose on the cardinality of a finite universe are (1) whether they are even --- achieved by claiming the cardinality of one variable is equal to its complement; and (2) whether they contain at least $n$ elements --- achieved by claiming there are at least $n+1$ distinct nouns interpreted as sets with different sizes. 

Why can it not say more? As the proofs show, the reason is that although the language can express ``one set is larger than another in cardinality'', it cannot provide the crucial detail of ``larger, but by how much?''. Knowing this, it can be seen that by adding the disjunctive connective $\lor$ to the language at the variable level, we can fix this issue. 

To be precise, let the \emph{$\vee$-nouns} be generated from $P$ by closing under the
binary operation $\vee$ and under complementation. Extend the interpretation $I$ to
all $\vee$-nouns by $I(x \vee y) = I(x) \cup I(y)$ and $I(\bar{x}) = M \setminus I(x)$. 

Consider the following theory: \[\Lambda_3= \{
\forall(x,\bar y),
\forall(x,\bar z),
\forall(y,\bar z),
\exists^{\geq}(x,y),
\exists^{\geq}(y,x),
\exists^{\geq}(y,z),
\exists^{\geq}(z,y),
\exists^{\geq}(y\vee z,\bar x)
\}\]

The first three sentences say that $x,y,z$ are pairwise disjoint.
Hence $I(y)\cup I(z)\subseteq I(\bar x).$
The last sentence says $|I(y)\cup I(z)|\geq |I(\bar x)|$. Since the sets are finite, it follows that $I(y)\cup I(z)=I(\bar x).$ Thus $M=I(x)\sqcup I(y)\sqcup I(z).$

The remaining cardinality comparisons imply $|I(x)|=|I(y)|=|I(z)|.$ Therefore, $|M|
=
|I(x)|+|I(y)|+|I(z)|
=
3|I(x)|.$
Hence every model of $\Lambda_3$ has cardinality divisible by $3$.

Conversely, every finite set of cardinality $3k$ can be partitioned into three sets of cardinality $k$, giving a model of $\Lambda_3$. Therefore, $Spec(\Lambda_3)= \{3k:k\in\mathbb{N}^+\}$.

Hence, we see that the sentence $\exists^{\geq}(y\vee z,\bar x)$ which says that ``the union of $y$ and $z$ is at least as large as $\bar{x}$'' is not expressible in $S^{\dagger}(card)$. This sentence is expressively strong in the sense that it allows us, when combining with the other sentences, to say not only that ``$\bar{x}$ is larger than $y$'', but it tells us that ``$\bar{x}$ is larger than $y$ by the size of $z$''. 

Another possibility is to add the conjunctive connective $\wedge$ at the noun level: let
the \emph{$\wedge$-nouns} be generated from $P$ by closing under $\wedge$ and
complementation, with $I(x \wedge y) = I(x) \cap I(y)$. This also allows us to express
divisibility by three\footnote{In fact, the $\lor$-extended and $\land$-extended noun languages are interdefinable: $I(x\vee y)=I(\overline{\bar{x}\wedge\bar{y}})$.}.

Consider the theory
\[\Lambda_3' =
\{\forall(x,\bar y),\forall(x,\bar z),\forall(y,\bar z),
\exists^{\geq}(x,y),\exists^{\geq}(y,x),
\exists^{\geq}(y,z),\exists^{\geq}(z,y),
\exists^{\geq}(y,\bar x\land\bar z)\}.\]

The first three sentences say that $x,y,z$ are pairwise disjoint. In particular, $I(y)\subseteq I(\bar x)\cap I(\bar z)=I(\bar x\land\bar z)$. Now since the last sentence says $|I(y)|\geq|I(\bar x\land\bar z)|$ and the sets are finite, it follows that $I(y)=I(\bar x)\cap I(\bar z)$. Therefore, every element outside both $I(x)$ and $I(z)$ belongs to $I(y)$, and hence $M=I(x)\sqcup I(y)\sqcup I(z)$.

The remaining cardinality comparisons imply $|I(x)|=|I(y)|=|I(z)|$. Therefore, $|M|=|I(x)|+|I(y)|+|I(z)|=3|I(y)|$, so every model of $\Lambda_3'$ has cardinality divisible by $3$.

Conversely, if $|M|=3k$, partition $M$ into three sets $X,Y,Z$ of cardinality $k$, and interpret $x,y,z$ by $X,Y,Z$, respectively. Then all the sentences of $\Lambda_3'$ are satisfied. Therefore $Spec(\Lambda_3')=\{3k:k\in\mathbb N^+\}$.

These examples illustrate how the addition of noun-level union or intersection allows the language, in effect, to say not merely that one set is larger than another, but ``larger by how much''. This additional expressive resource is sufficient to express three-foldness, which is impossible in the base language.

\hfill \\ 
\subsection{Expressive Power of Natural Logics} \hfill \\ 
We can take the collection of spectra as one measure of the expressive power of natural-logical systems whose semantics is restricted to finite models. 

As we have seen, sentence-level conjunction does not enlarge the collection of theory spectra, since a theory already requires all of its member sentences to hold simultaneously. The extension obtained in Section~4 is given by sentence-level disjunction, which allows finite unions of spectra, and the special form of the original spectra then yields the topology described there.

One possible direction for further work is to consider analogous spectrum questions for languages admitting infinite models. In that setting one could study $Spec_\infty(\Gamma) = \{\kappa:
\Gamma\text{ has a model of cardinality }\kappa\}$, 
where $\kappa$ ranges over infinite cardinals. It would be interesting to investigate such spectra for syllogistic logics with cardinality comparisons on infinite sets, such as the system studied by Moss and Topal~\cite{CardComp}.

It is also instructive to compare $S^\dagger(card)$ with the fragment obtained
by dropping the two cardinality comparisons, so that only $\forall(x,y)$ and
$\exists(x,y)$ remain. There, every spectrum is $\varnothing$ or a tail $T_k$: suppose $\langle M, I\rangle \models \Gamma$ and $a \in M$. We can add a new element $a'$ to $M$ and let $I'(x) = I(x) \cup \{a'\}$ if $a \in I(x)$, and $I'(x) = I(x)$ otherwise. This is an interpretation because $a$ belongs to exactly one of $I(x)$ and $I(\bar x)$, so $a'$ is added to exactly one of $I'(x)$ and $I'(\bar x)$. Then it is easy to see that $\langle M \cup \{a'\}, I'\rangle \models \Gamma$, so $Spec(\Gamma)$ is upward closed, and thus it is either $\varnothing$ or $T_k$, where $k$ is its least element. Therefore, it is the addition of cardinality comparisons that allows the language to express parity.

\hfill \\ 
\subsection{Natural Logic and Human Reasoning} \hfill \\ 
As Moss~\cite{Larry} mentions, one purpose of formalising natural language in the way Natural Logic does is to ``describe logical tools that seem to be implicit in the human reasoning facility.''  

From this perspective, the present classification identifies a precise expressive limitation of $S^\dagger(card)$: cardinality comparison together with noun complementation suffices to detect parity,
but not higher divisibility conditions such as divisibility by $3$.

The result may therefore help isolate which additional formal resources are required to represent richer numerical reasoning within a natural-logic framework. 

\hfill \\ 
\subsection{Model Theory} \hfill \\ 
The construction of the characteristic theory $\Gamma_n$ is analogous to the method of diagrams in first-order model theory. Recall that, for
a model $\mathfrak{M}$ in a first-order language, one may expand the language by adding a constant symbol for each element of $M$ and write down the formulas true of these named elements.

\begin{definition}
	Let $\mathfrak{M}$ be a model in language $L$. Let $L_{M}$ be the extension of $L$ by adding a constant symbol for each element of $M$ and $\mathfrak{M}_M$ be a model in this new language with the natural interpretation. Then the complete diagram of $\mathfrak{M}$ is the set $CDiag(\mathfrak{M}) = \{\phi \mid \mbox{$\phi$ is an $L_{M}$ sentence and } \mathfrak{M}_{M} \models \phi\}$. 
\end{definition}
	
Our construction follows a similar idea at a different level. Rather
than naming each element of the finite universe, we introduce nouns
naming each subset of the universe and record every inclusion,
nonempty-intersection, and cardinality-comparison statement about
those subsets that is expressible in $S^\dagger(card)$.

In first-order model theory, we have the following result:

\begin{theorem}
Let $\mathfrak M$ and $\mathfrak N$ be models in $L$. Then $\mathfrak M$ can be elementarily embedded into $\mathfrak N$ iff there is an expansion $\mathfrak N_M$ of $\mathfrak N$ to $L_M$ such that $\mathfrak N_M\models CDiag(\mathfrak M)$.
\end{theorem}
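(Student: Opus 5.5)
The plan is to prove both directions of the biconditional using the standard diagram argument, taking the usual facts about elementary embeddings as given.

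For the forward direction, suppose $f:\mathfrak M\to\mathfrak N$ is an elementary embedding. We expand $\mathfrak N$ to an $L_M$-structure $\mathfrak N_M$ by interpreting each new constant $c_a$ (for $a\in M$) as $f(a)$. Now take any $\phi\in CDiag(\mathfrak M)$. Such a sentence can be written as $\psi(c_{a_1},\dots,c_{a_k})$, where $\psi(v_1,\dots,v_k)$ is an $L$-formula and $\mathfrak M\models\psi(a_1,\dots,a_k)$. Elementarity of $f$ gives $\mathfrak N\models\psi(f(a_1),\dots,f(a_k))$. By the way we interpreted the constants, this says exactly that $\mathfrak N_M\models\phi$.

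For the converse, suppose $\mathfrak N_M\models CDiag(\mathfrak M)$. Define $f:M\to N$ by setting $f(a)$ to be the interpretation of $c_a$ in $\mathfrak N_M$.

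\begin{itemize}
\item $f$ is injective: if $a\neq b$, then $c_a\neq c_b$ belongs to $CDiag(\mathfrak M)$, so $f(a)\neq f(b)$.
\item $f$ is elementary: fix an $L$-formula $\psi(v_1,\dots,v_k)$ and elements $a_1,\dots,a_k\in M$. If $\mathfrak M\models\psi(\bar a)$, then $\psi(\bar c_a)\in CDiag(\mathfrak M)$, hence $\mathfrak N\models\psi(f(\bar a))$. If $\mathfrak M\not\models\psi(\bar a)$, then $\neg\psi(\bar c_a)$ is in the diagram, hence $\mathfrak N\not\models\psi(f(\bar a))$.
\end{itemize}

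Together these show $f$ is an elementary embedding.

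The only step needing care is matching the syntax of $L_M$-sentences with formulas evaluated at tuples of parameters. Formally this is the lemma that substituting constants for variables commutes with satisfaction: $\mathfrak N_M\models\psi(c_{a_1},\dots,c_{a_k})$ if and only if $\mathfrak N\models\psi(c_{a_1}^{\mathfrak N_M},\dots,c_{a_k}^{\mathfrak N_M})$. This is proved by a routine induction on $\psi$, and it is the one place I expect any real work.

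It is also worth noting the contrast with our $\Gamma_n$ argument. There, closure of the diagram under negation is replaced by the semantic negations of Lemma~\ref{sem_neg}. Our version also lacks injectivity, since distinct subsets can collapse in $m$. That is why Lemma~\ref{smallest} yields only spectrum inclusion, not an embedding.
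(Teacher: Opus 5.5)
Your proof of the stated theorem is correct. It is the standard elementary-diagram argument. The paper gives no proof of its own: it only quotes the result from first-order model theory, citing Kirby and Tent--Ziegler, and your argument is the textbook route.

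Your closing comparison with $\Gamma_n$ contains a false claim, however. Distinct subsets \emph{cannot} collapse in a model of $\Gamma_n$.

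\begin{itemize}
\item Suppose $A\neq B\subseteq n$, say $a\in A\setminus B$, and put $x=I_n^{-1}(A)$, $y=I_n^{-1}(B)$. Then $I_n(x)\cap I_n(\bar y)\ni a$, so $\exists(x,\bar y)\in\Gamma_n$.
\item Hence every $\langle m,I_0\rangle\models\Gamma_n$ has $I_0(x)\not\subseteq I_0(y)$. These sentences play exactly the role of your $c_a\neq c_b$.
\item So the map $A\mapsto I_0(I_n^{-1}(A))$ on $\mathcal P(n)$ is injective and commutes with complement.
\item By Lemma~\ref{sem_neg}, $\Gamma_n$ contains each atomic sentence over $P$ or its semantic negation. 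So this map both preserves and reflects inclusion, overlap, and both cardinality comparisons.
\end{itemize}

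What this map lacks is not injectivity but point-level structure. It need not preserve unions, so it need not come from any map of points. For example, take $n=3$ and $m>3$. The construction in Lemma~\ref{o_indiscernible} sends $\{0\}$ and $\{1\}$ to themselves. But it sends $\{0,1\}$, which is named by $\overline{\{2\}}$, to $m\setminus\{2\}\neq\{0,1\}$.

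This is the paper's ``one type level up'' point, since the nouns name subsets rather than elements. It, not a failure of injectivity, is the right explanation for why Lemma~\ref{smallest} gives spectrum inclusion rather than an embedding of $n$ into $m$.
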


The analogous result in our setting is Lemma~\ref{smallest}: if a theory $\Gamma$ has an
$n$-element model, then $Spec(\Gamma_n) \subseteq Spec(\Gamma)$. Equivalently,
$Spec(\Gamma_n)$ is the $\subseteq$-least spectrum containing $n$, so $\Gamma_n$ records
exactly as much about the cardinality $n$ as the expressive resources of $S^\dagger(card)$ permit. 

The difference between our characteristic theories and the diagrams is also worth noting: our nouns name subsets of the universe rather than
its elements, so $\Gamma_n$ is a diagram at one type level up.

(See, for example, Kirby~\cite{MT} and Tent and Ziegler~\cite{MT2} for more applications of the method of diagram in FOL model theory).

\hfill \\ 

{\bf Acknowledgements.} I would like to thank Lawrence S. Moss for his lectures on natural logic and for supervising the project from which this paper grew. His suggestions and insightful discussions were invaluable in the development of this work. I am also grateful to Nicholas Ramsey for his valuable feedback on an earlier version of the paper, which helped me substantially clarify its structure and situate the results within the literature on spectra. I would also like to thank the attendees at my project presentation at the Institute for Logic, Language and Computation (ILLC), where this project was developed, for their helpful feedback and discussion. I am also grateful to an anonymous referee whose detailed comments on an earlier version of this paper led to substantial improvements in its motivation and exposition. Lastly, I would like to thank Cheng Liao for giving me some valuable advice on revising the paper. 

\medskip
\noindent{\it AI Disclosure.}
The author used OpenAI's ChatGPT and Anthropic's Claude during later stages of the finalisation and revision of this manuscript for editorial and critical assistance, including suggestions concerning exposition, organization, wording, and the presentation of mathematical arguments. The central mathematical results and proofs of the paper were developed, proved, and written up by the author before summer 2024, without any use of AI. All AI-assisted suggestions were independently evaluated and, where incorporated, revised or verified by the author. The author is entirely responsible for the mathematical and scientific content of the paper and for its compliance with the journal's authorship policy.


\bibliographystyle{amsplain}

\end{document}